\newtheorem{THEOREM}{Theorem}[section]
\newtheorem{LEMMA}[THEOREM]{Lemma}
\newtheorem{REMARK}[THEOREM]{Remark}
\newtheorem{PROPOSITION}[THEOREM]{Proposition}
\newtheorem{COROLLARY}[THEOREM]{Corollary}
\newtheorem{PROBLEM}[THEOREM]{Problem}
\newtheorem{CONJECTURE}[THEOREM]{Conjecture}
\newtheorem{OBSERVATION}[THEOREM]{Observation}
\newcommand{\Dc}{\mathcal{D}}
\newcommand{\K}{\mathcal{K}}
\newcommand{\Ply}{\mathcal{P}}
\newcommand{\M}{\mathcal{M}}
\newcommand{\C}{\mathcal{C}}
\newcommand{\Q}{\mathcal{Q}}
\newcommand{\INT}{\mathrm{Int}}
\title{Covering the Plane by a Sequence of Circular Disks with a Constraint}
\author{Amitava Bhattacharya, Anupam Mondal
	\footnote{Present affiliation: The Institute of Mathematical Sciences, HBNI, Chennai, India. Email: \tt{anupamm\,@\,imsc.res.in}}\\
	\small School of Mathematics\\[-0.8ex]
	\small Tata Institute of Fundamental Research\\[-0.8ex] 
	\small Mumbai, India\\
	\small\tt \{amitava, anupamm\} @\,math.tifr.res.in\\
}
 \date{} 
\begin{document}

\maketitle

\begin{abstract}
We are interested in the following problem of covering the plane by a sequence of congruent circular disks with a constraint on the distance between consecutive disks. Let $(\Dc_n)_{n \in \mathbb N}$ be a sequence of closed unit circular disks such that $\cup_{n \in \mathbb{N}} \Dc_n = \mathbb {R}^2$ with the condition that for $n \ge 2$, the center of the disk $\Dc_n$ lies in $\Dc_{n-1}$. What is a ``most economical" or an optimal way of placing $\Dc_n$ for all $n \in \mathbb{N}$? 
We answer this question in the case where no ``sharp" turn is allowed, i.e. if $C_n$ is the center of the disk $\Dc_n$, then for all $n \ge 2$, %
$\angle C_{n-1}C_nC_{n+1}$ is not very small.

We also consider a related problem. We wish to find out an optimal way to cover the plane with unit circular disks with the constraint that each disk contains the centers of at least two other disks. We find out the answer in the case when the centers of the disks form a two-dimensional lattice.
\end{abstract}
{\bf Mathematics Subject Classifications: } 05B40, 52C05, 52C15, 11H31

\section{Introduction}
Let $\Dc$ denote the (closed) unit circular disk $\{(x,y) \in \mathbb {R}^2 : x^2 + y^2 \le 1\}$ and $\mathcal I$ denote the square $[-1,1]\times[-1,1]$ in  $\mathbb {R}^2$. If $\mathscr F = \{\Dc_n : n \in \mathbb N\}$ is a family of unit circular disks in $\mathbb {R}^2$ (i.e. for all $n \in \mathbb N$, $\Dc_n$ is a congruent copy of $\Dc$) such that $\cup_{n \in \mathbb{N}} \Dc_n = \mathbb {R}^2$, then we say the family of disks $\mathscr F$ covers $\mathbb {R}^2$ and $\mathscr F$ is a \emph{covering} of $\mathbb {R}^2$ by unit circular disks. If $(\Dc_n)_{n \in \mathbb N}$ is a sequence of closed unit circular disks such that $\cup_{n \in \mathbb{N}} \Dc_n = \mathbb {R}^2$, then we say $(\Dc_n)_{n \in \mathbb N}$ is a \emph{sequence covering} of the plane (or $\mathbb {R}^2$) by unit circular disks. Similarly, if $X$ is a bounded subset of $\mathbb{R}^2$ and $(\Dc_1, \ldots, \Dc_N)$ is a finite sequence of closed unit circular disks such that $X \subset \cup_{n = 1}^N \Dc_n$, then we say $(\Dc_1, \ldots, \Dc_N)$ is a finite sequence covering of $X$.

For $X \subset \mathbb{R}^2$ and a positive real number $\lambda$, 
by $\lambda X$ we mean the set $\{\lambda\cdot x : x \in X\}$. We use $\INT(X)$ and $a(X)$ to denote the interior and area of set $X$ (the Lebesgue measure of $X$, whenever it exists), respectively. 

Let $N_\lambda$ denote the cardinality of the subfamily  of $\mathscr F$ consisting of only the disks that intersect  $\INT (\lambda \mathcal  I)$ and $\gamma (\lambda,\mathscr F)$ denote the ratio $\frac{N_\lambda \cdot a(\Dc)}{a(\lambda \mathcal  I)}$. We define the \emph{lower density} and \emph{density} of the covering $\mathscr F$ to be $\liminf_{\lambda \to \infty} \gamma (\lambda, \mathscr F)$ and $\lim_{\lambda \to \infty} \gamma (\lambda, \mathscr F)$, respectively, if the limits exist. The optimal lower density 
of covering the plane by unit circular disks is denoted by $\gamma (\Dc)$ and is defined as   
$$\gamma (\Dc) = \inf_{\mathscr F} \liminf_{\lambda \to \infty} \gamma (\lambda, \mathscr F).$$

We note that in the definitions of $N_\lambda$ and $\gamma (\lambda,\mathscr F)$, the square $\mathcal  I$ can be substituted by any convex bounded set in $\mathbb R^2$ which contains the origin as the set can be arbitrarily approximated by the union of a sequence of squares. 
We refer to \cite[Chapters~2--4]{PA} and \cite{ZONG2014297} for background and basic concepts.

Kershner proved in \cite{K} that $\gamma (\Dc) \ge \frac{2\pi}{\sqrt{27}}$ and this bound can be achieved when centers of the circles are arranged in a regular hexagonal lattice (see Figure~\ref{fig-kershner}).
\begin{figure}[!ht]
	\centering
	\begin{tikzpicture}
	\begin{scope}[scale=0.8, transform shape]
	\clip(-1,-4) rectangle (12,5);
	
	\foreach \a in {-2, -1, ..., 8}
	\draw (\a*1.732,0) circle (1);
	
	\foreach \a in {-2, -1, ..., 8}
	\draw [xshift=0.866cm,yshift=1.5cm] (\a*1.732,0) circle (1);
	
	\foreach \a in {-2, -1, ..., 8}
	\draw [xshift=0.866cm,yshift=-1.5cm] (\a*1.732,0) circle (1);
	
	\foreach \a in {-2, -1, ..., 8}
	\draw [yshift=3cm] (\a*1.732,0) circle (1);
	
	\foreach \a in {-2, -1, ..., 8}
	\draw [yshift=-3cm] (\a*1.732,0) circle (1);
	
	\foreach \a in {-2, -1, ..., 8}
	\draw [xshift=0.866cm,yshift=4.5cm] (\a*1.732,0) circle (1);
	
	\foreach \a in {-2, -1, ..., 8}
	\draw [xshift=0.866cm,yshift=-4.5cm] (\a*1.732,0) circle (1);
	
	\foreach \a in {0, 60, ..., 350}
	\draw [xshift=5.196cm,red,densely dashed, thick]  (\a:1.732) -- (\a+60:1.732);
	\end{scope}
	\end{tikzpicture}
	
	\caption{An optimum covering of the plane by circular disks where centers of the disks are arranged in a regular hexagonal lattice}
	\label{fig-kershner}
\end{figure}
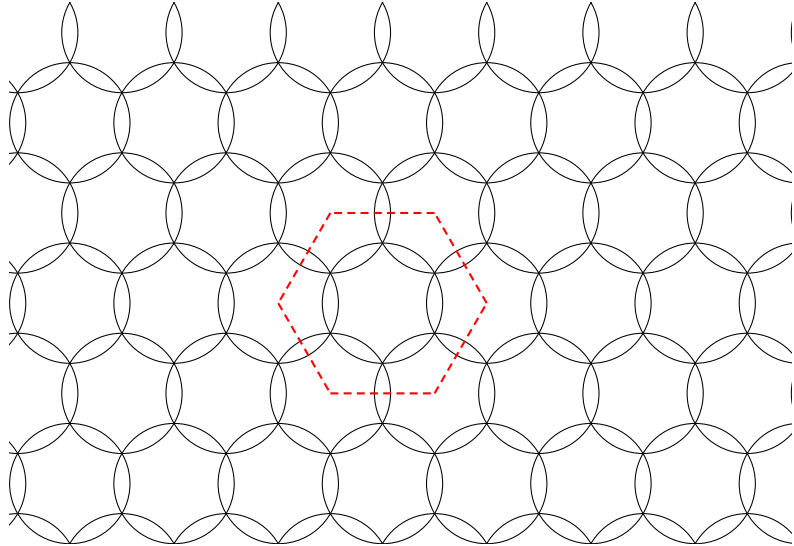
Later L. Fejes T\'{o}th provided a far-reaching generalization of this result and its counterpart for optimal circle packing in the plane, originally proved by Thue in 1892 \cite{thue1892om} and again in 1910 \cite{thue1910dichteste} (as cited in \cite{PA}), for arbitrary convex, compact sets in $\mathbb{R}^2$ with non-empty interiors.

We say two closed bounded convex sets $S$ and $T$ in $\mathbb{R}^2$ \emph{intersect simply} (refer to \cite{BR}) or \emph{do not cross each other} if they satisfy one of the conditions:
	\begin{enumerate}[(i)]
	\item $\INT(S) \cap \INT(T) = \emptyset$;
	\item $S \subset T$ or vice versa;
	\item the boundary of $S \cap T$ can be split up into two non-overlapping connected curves, one belonging to the boundary of $S$ and the other to that of $T$.
\end{enumerate}
We observe that $S$ and $T$ intersect simply if and only if both $S - T$ and $T - S$ are connected. Also two congruent copies of a circular disk always intersect each other simply.

\begin{THEOREM}\cite{MR38086}
	If $\K$ is a convex hexagon completely covered by $n$ congruent copies of a convex, compact set $\C$ (with a non-empty interior) 
	such that every pair of sets intersect each other simply,
	then 
	$n \ge \frac{a(\K)}{a(\Ply_6)},$
	where $\Ply_6$ denotes a hexagon of maximum area inscribed in $\C$.
\end{THEOREM}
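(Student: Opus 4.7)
The plan is to follow L.~Fejes Tóth's classical cellular-decomposition strategy. I would partition $\K$ into $n$ convex polygonal cells, one per covering copy $\C_i$, show the cells have on average at most six sides by Euler's formula, and bound each cell's area using the largest inscribed polygon.

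First, I would use the simple-intersection hypothesis to produce a subdivision $\K = H_1 \cup \cdots \cup H_n$ into convex polygons with $H_i \subseteq \C_i$ (some $H_i$ possibly empty). Whenever two copies $\C_i, \C_j$ overlap, simple intersection forces the lens $\C_i \cap \C_j$ to be bounded by exactly one arc of $\partial \C_i$ and one of $\partial \C_j$; such a lens can be separated by the chord joining the endpoints of the two arcs, producing two convex pieces, each contained in a single copy. Carried out globally---form the arrangement of the boundary curves $\partial \C_i \cap \K$, assign each atomic face to one copy containing it, and then straighten every internal boundary arc to the chord between its endpoints---this yields the required convex cells $H_i$.

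Next, let $p_i$ denote the number of sides of $H_i$. Applying Euler's formula to the resulting planar subdivision of $\K$, with every interior vertex of degree at least three and the outer face contributing six sides, a routine double count of side incidences gives
$$\sum_{i=1}^n p_i \;\le\; 6n,$$
so the average $\bar p := \tfrac{1}{n}\sum p_i$ is at most $6$. Now let $\Ply_k$ denote a maximum-area convex $k$-gon inscribed in $\C$. Since $H_i$ is a convex $p_i$-gon sitting inside a congruent copy of $\C$, we have $a(H_i) \le a(\Ply_{p_i})$. By Dowker's theorem the function $k \mapsto a(\Ply_k)$ is concave, and it is monotone nondecreasing, so Jensen's inequality yields
$$a(\K) \;=\; \sum_{i=1}^n a(H_i) \;\le\; \sum_{i=1}^n a(\Ply_{p_i}) \;\le\; n\,a(\Ply_{\bar p}) \;\le\; n\,a(\Ply_6),$$
which rearranges to the claimed $n \ge a(\K)/a(\Ply_6)$.

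The main obstacle is the decomposition step. One has to verify that the straightening of lens boundaries into chords can be performed consistently across all pairs of copies simultaneously, without uncovering any point of $\K$ and while keeping the resulting cells convex and pairwise non-crossing; this is precisely where the hypothesis that every pair of copies intersects simply is genuinely used (together with the convexity of $\K$ at its boundary). Once the polygonal decomposition is secured, the remainder---Euler's formula and Dowker's concavity coupled with Jensen---is routine.
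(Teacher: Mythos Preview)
The paper does not prove this theorem; it is quoted from Fejes T\'{o}th's 1950 paper \cite{MR38086} and used as a black box (to derive Corollary~\ref{FT-bound}). So there is no proof in the paper to compare your attempt against directly.

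That said, your outline is precisely the classical cellular-decomposition argument, and the paper reuses exactly this machinery---Euler's formula to bound the average number of sides by $6$ (Lemma~\ref{lemma-best6}, following Bambah--Rogers \cite{BR}), Dowker's concavity (Theorem~\ref{theorem-dowker}), and Jensen's inequality (Theorem~\ref{theorem-jensen})---in its own proof of Theorem~\ref{prop-1}. The only structural difference is the decomposition step: for unit disks the paper obtains the cells as Voronoi regions (Lemma~\ref{lemma-poly}), which automatically yields convex polygons contained in the respective disks. For general convex $\C$ Voronoi cells need not lie inside the corresponding copies, and one is forced into the chord-straightening procedure you sketch, which genuinely requires the simple-intersection hypothesis. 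You are right that this is the delicate point; making it rigorous---ensuring that the straightened cells remain convex, still cover $\K$, and are each contained in a single copy when more than two copies overlap---is essentially the content of the Bambah--Rogers paper \cite{BR}, and your description of it is more of a plan than a proof.
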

This theorem leads to the following corollary, which, in turn, proves the bound on $\gamma (\Dc)$ as found by Kershner.
\begin{COROLLARY}\label{FT-bound}
	The optimal lower density of covering the plane by 
	congruent copies of the convex, compact set $\C$ with a non-empty interior, such that every pair of sets intersect each other simply, is at least the ratio of the area of $\C$ and the area of a hexagon of maximum area inscribed in $\C$, i.e. 
	$\gamma (\C) \ge \frac{a(\C)}{a(\Ply_6)}.$
\end{COROLLARY}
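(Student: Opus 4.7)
The plan is to extract the corollary from the preceding theorem by applying it to increasingly large hexagonal regions and then dividing by area. The paper already notes that in the definition of $\gamma(\C)$ the reference square $\mathcal I$ may be replaced by any bounded convex set containing the origin; so I would fix once and for all a convex hexagon $\K_0$ with $0 \in \INT(\K_0)$ and work throughout with the dilates $\lambda \K_0$.

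Given an arbitrary admissible covering $\mathscr F = \{\C_n\}_{n \in \mathbb N}$ of $\mathbb R^2$ by congruent copies of $\C$ with the pairwise simple-intersection property, let $N_\lambda$ denote the number of members of $\mathscr F$ that meet $\INT(\lambda \K_0)$. We may assume $N_\lambda < \infty$, since otherwise $\gamma(\lambda,\mathscr F) = \infty$ and the asserted bound is trivial. The first substantive step is to check that this finite subfamily already covers the closed hexagon $\lambda \K_0$: its union is a finite union of closed sets containing $\INT(\lambda \K_0)$, hence is closed, hence contains $\overline{\INT(\lambda \K_0)} = \lambda \K_0$ (using that $\lambda \K_0$ is a convex body with non-empty interior, so it equals the closure of its interior).

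With this hexagon-covering in hand, the theorem applies verbatim and yields
$$N_\lambda \;\ge\; \frac{a(\lambda \K_0)}{a(\Ply_6)} \;=\; \frac{\lambda^2\, a(\K_0)}{a(\Ply_6)}.$$
Multiplying both sides by $a(\C)/a(\lambda \K_0)$ gives
$$\gamma(\lambda,\mathscr F) \;=\; \frac{N_\lambda \cdot a(\C)}{a(\lambda \K_0)} \;\ge\; \frac{a(\C)}{a(\Ply_6)}$$
for every $\lambda > 0$, with the bound independent of $\lambda$. Taking the liminf as $\lambda \to \infty$ and then the infimum over all admissible coverings $\mathscr F$ yields $\gamma(\C) \ge a(\C)/a(\Ply_6)$, as claimed.

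The only delicate point is the boundary bookkeeping in step two: a copy $\C_n$ that merely touches $\partial(\lambda \K_0)$ from outside is excluded from $N_\lambda$, and one must argue that no such copy is needed to cover $\lambda \K_0$. The closure argument above handles this cleanly, which is why switching from the square $\mathcal I$ to a hexagon $\K_0$ makes the proof painless: no approximation error appears and the inequality holds uniformly in $\lambda$. If one preferred to stick with $\mathcal I$ as in the paper's definition, one would sandwich $\lambda \mathcal I$ between two similar hexagons whose area ratio tends to $1$, and then pass to the limit; the boundary contribution is $o(\lambda^2)$ and the same bound survives.
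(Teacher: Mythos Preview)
Your proof is correct and is precisely the natural derivation the paper has in mind: the paper does not spell out a proof for this corollary at all, merely stating that ``this theorem leads to the following corollary,'' so your argument supplies the omitted details. The key step---replacing $\mathcal I$ by a fixed hexagon $\K_0$, noting that the finitely many copies meeting $\INT(\lambda\K_0)$ already cover the closed hexagon $\lambda\K_0$ by a closure argument, and then applying the theorem directly---is exactly the intended route, and your handling of the boundary bookkeeping is clean.
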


We are interested in the following problem of finding an optimal covering of the plane by a sequence of unit circular disks with a given constraint on the distance between the centers of consecutive disks
\footnote{Problem~\ref{prob-1} is motivated by a practical problem of robotic exploration with discrete and limited visibility discussed in \cite{GB,GBBS}. The problem of robotic exploration asks for a cost-effective strategy for a point robot to completely explore an unknown planar region avoiding obstacles. Due to mechanical limitations the robot can only scan a ball of finite and fixed radius around it at a given instance. Also it is not feasible to perform the scanning process continuously, i.e. the robot must take a break between two scans. Along with that after performing the first scan, the robot must stay inside the region that is scanned by that time (i.e. the region known to the robot at that instance).}.

\begin{PROBLEM}\normalfont\label{prob-1}
	What is the optimal lower density of covering the plane by a sequence of unit circular disks with the constraint that each disk contains the center of the next disk? Does there exist a sequence covering with the given constraint attaining the optimal lower density?
\end{PROBLEM}

We answer this question and prove Theorem~\ref{prop-1} using ideas by Fejes T\'{o}th in the case where the following restriction holds: if $(\Dc_n)_{n \in \mathbb N}$ is a sequence covering of the plane by unit circular disks with constraint mentioned in Problem~\ref{prob-1} and for all $n \ge 1$, $C_n$ is the center of the disk $\Dc_n$, then for all $n \ge 2$, measure of (smaller of the angles) $\angle C_{n-1}C_nC_{n+1}$ is at least $\frac{2\pi}{3}$. By $\angle C_{n-1}C_nC_{n+1}$ we always mean the smaller (i.e. the one with measure less than $\pi$) of the two angles (or either, if measure of $\angle C_{n-1}C_nC_{n+1}$ is $\pi$). Also abusing the notation, we use $\angle C_{n-1}C_nC_{n+1}$ to mean both the angle and also measure of the angle depending on the context.

\begin{THEOREM}\label{prop-1}
		If $\gamma^*(\Dc)$ is the optimal lower density of covering the plane by a sequence $(\Dc_n)_{n \in \mathbb N}$ of  
		unit circular disks (with $C_n$ being the center of the disk $\Dc_n$ for all $n \in \mathbb N$) such that for $n \ge 2$, $C_n$ lies in $\Dc_{n-1}$ and $\angle C_{n-1}C_nC_{n+1} \ge \frac{2\pi}{3}$, then 
	$$\gamma^*(\Dc) \ge \frac{2\pi}{2 + \sqrt{3}}.$$
	This bound can also be achieved.
\end{THEOREM}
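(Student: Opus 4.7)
The plan is to adapt the Fejes T\'{o}th Voronoi-cell argument underlying Corollary~\ref{FT-bound} to the sequence setting. For a sequence covering satisfying the hypotheses, let $V_n$ denote the Voronoi cell of $C_n$ with respect to all the centers $(C_m)_{m \in \mathbb{N}}$; the $V_n$'s tile $\mathbb{R}^2$, so $\gamma^*(\Dc)$ equals $\pi$ divided by the asymptotic average area of the $V_n$, and the lower bound reduces to showing that this average is at most $(2+\sqrt{3})/2$. The sequence hypotheses $C_{n\pm 1} \in \Dc_n$ force each $V_n$ to lie inside the smaller convex region
\[
R_n \;=\; \Dc_n \cap \bigl\{x : |x - C_n| \le |x - C_{n-1}|\bigr\} \cap \bigl\{x : |x - C_n| \le |x - C_{n+1}|\bigr\},
\]
i.e.\ the intersection of $\Dc_n$ with the two perpendicular-bisector half-planes determined by the sequence neighbors.

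The heart of the argument is a uniform inscribed-hexagon inequality: for every admissible configuration, the maximum area of a hexagon inscribed in $R_n$ is at most $(2+\sqrt{3})/2$. A monotonicity observation (shrinking either $|C_{n-1}C_n|$ or $|C_nC_{n+1}|$ only shrinks $R_n$) reduces the question to the extremal case $|C_{n-1}C_n| = |C_nC_{n+1}| = 1$, parametrized by $\theta = \angle C_{n-1}C_nC_{n+1} \in [2\pi/3, \pi]$, on which family $a(R_n)$ is in fact the constant $\pi/3 + \sqrt{3}/2$. At the two endpoints the extremal hexagon is explicit. For $\theta = 2\pi/3$ the two bisectors meet at a single point on $\partial \Dc_n$, so $\partial R_n$ consists of two chords and one arc, and the hexagon of area $(2+\sqrt{3})/2$ has one vertex at the chord meeting point, four vertices spaced along the arc, and one at the far endpoint of the arc. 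For $\theta = \pi$ the bisectors are parallel and $R_n$ is a symmetric strip inside $\Dc_n$; the hexagon has two vertices at the arc caps of the strip and four at the corners where the chords meet $\partial \Dc_n$. In both cases a shoelace computation gives area exactly $(2+\sqrt{3})/2$, and intermediate angles are then handled either by a direct optimization of the inscribed hexagon or by a symmetry/continuity argument exploiting the constancy of $a(R_n)$. Combined with the standard averaging step behind Corollary~\ref{FT-bound} --- the concavity of the maximum-inscribed-$n$-gon area in a convex body together with the fact that planar Voronoi cells have on average at most six sides --- this yields $\gamma^*(\Dc) \ge 2\pi/(2+\sqrt{3})$.

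For the matching construction, I take the lattice $\Lambda$ generated by $(1, 0)$ and $\bigl(\tfrac{1}{2}, \tfrac{2+\sqrt{3}}{2}\bigr)$; its fundamental parallelogram has area $(2+\sqrt{3})/2$, and a direct Voronoi-vertex check shows that unit disks centered at the lattice points cover $\mathbb{R}^2$ (the farthest vertex of the Voronoi cell of $0$ lies at distance exactly $1$). One enumerates the centers in a snake order: sweep row $0$ left-to-right in unit horizontal steps (all interior angles equal to $\pi$), then at the rightmost row-$0$ center $(N, 0)$ insert two transition disks at $\bigl(N + \tfrac{1}{2}, \tfrac{\sqrt{3}}{2}\bigr)$ and $(N, \sqrt{3})$, arriving at the row-$1$ lattice point $\bigl(N - \tfrac{1}{2}, \tfrac{2+\sqrt{3}}{2}\bigr)$, sweep row $1$ right-to-left, and iterate. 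A short case check verifies that every consecutive distance is $\le 1$ and every interior angle is $\ge 2\pi/3$. Since only $O(1)$ extra disks are added per row of length $N$, the density tends to $\pi/\bigl((2+\sqrt{3})/2\bigr) = 2\pi/(2+\sqrt{3})$ as $N \to \infty$. The main technical obstacle is the uniform hexagon bound: the two extremal $\theta$-configurations are explicit and easy, but establishing the inequality uniformly across the one-parameter family $\theta \in (2\pi/3, \pi)$ requires a careful optimization over the vertex positions of the inscribed hexagon, together with handling the change in combinatorial type of $\partial R_n$ (one arc versus two arcs) as $\theta$ crosses $2\pi/3$.
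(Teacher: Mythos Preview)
Your lower-bound argument shares its skeleton with the paper's --- Voronoi cells, the containment $V_n\subset R_n$, Dowker concavity, and the Euler-formula bound $\sum\nu_n\le 6N$ --- but it has a genuine gap at the ``standard averaging step.'' That step, as used behind Corollary~\ref{FT-bound}, needs a \emph{single} concave function $a^*(\nu)$ with $a(V_n)\le a^*(\nu_n)$ for every $n$: then Jensen and $\sum\nu_n\le 6N$ give $\tfrac1N\sum a(V_n)\le a^*(6)$. In your setting the regions $R_n$ are not mutually congruent --- each depends on its own angle $\theta_n$ --- so you only have $a(V_n)\le q_{\theta_n}(\nu_n)$ with a \emph{different} concave function $q_{\theta_n}$ for each $n$, and Jensen does not apply across the family. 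Your uniform hexagon bound $q_\theta(6)\le(2+\sqrt3)/2$, even if established, is not enough: a cell with $\nu_n>6$ can satisfy $q_{\theta_n}(\nu_n)>q_{\theta_n}(6)$, and indeed $q_\theta(\nu)\to a(R_n)=\tfrac{\pi}{3}+\tfrac{\sqrt3}{2}>(2+\sqrt3)/2$ as $\nu\to\infty$. What is actually required is the stronger statement $q_\theta(\nu)\le q_{2\pi/3}(\nu)$ for \emph{every} $\nu\ge3$ and every $\theta\in[2\pi/3,\pi]$; the paper supplies exactly this as Lemma~\ref{M-bigger-poly}, by a triangle-rearrangement that slides vertices off one arc of $\M'_\theta$ and replants them on the other so that the resulting $\nu$-gon sits inside $\M=\M'_{2\pi/3}$. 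Without that lemma (or some other device producing a common concave majorant for all $\theta$), the averaging does not go through.

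Your construction also needs repair. A row-by-row snake through the lattice $\Lambda$ does not give a single $\mathbb N$-indexed sequence covering all of $\mathbb R^2$: if rows are infinite you never leave row $0$; if they have finite length $N$ you cover only a vertical half-strip. The paper resolves this with an outward-growing construction --- disks placed along the boundaries of concentric regular dodecagons $\Pi_j$, with $O(1)$ transition disks (respecting both the distance and the $2\pi/3$ angle constraints) to jump from layer $j$ to layer $j{+}1$ --- so that every bounded set is eventually covered, and the density computation becomes the $6k^2$-versus-$3(2+\sqrt3)k^2$ ratio you are aiming for.
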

As discussed in detail in Section~4, it seems that the restriction on $\angle C_{n-1}C_nC_{n+1}$ is only due to the limitations of the proof-techniques used in this article and the bound on the optimal lower density mentioned in Theorem~\ref{prop-1} would hold true even after removing the restriction on $\angle C_{n-1}C_nC_{n+1}$. We propose the following conjecture.
\begin{CONJECTURE}\label{conj}
	If $\gamma'(\Dc)$ is the optimal (lower) density of 
	covering the plane by a sequence of unit circular disks such that each disk in the sequence contains the center of the next disk,
	then $$\gamma'(\Dc) \ge \frac{2\pi}{2 + \sqrt{3}} \approx 1.68357.$$
\end{CONJECTURE}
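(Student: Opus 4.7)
My plan is to imitate the Fejes T\'oth-style argument behind Corollary~\ref{FT-bound}, but with the role of the ``maximum hexagon inscribed in $\Dc$'' played by a more restricted inscribed hexagon that encodes the sequence constraint. Given any valid sequence covering $(\Dc_n)$, let $V_n = \{P\in\mathbb R^2 : |P-C_n|\le|P-C_m|\ \forall m\}$. These Voronoi cells are convex polygons tiling $\mathbb R^2$, and since the covering covers every point, any $P\in V_n$ lies in some $\Dc_m$ and so $|PC_n|\le|PC_m|\le 1$, giving $V_n\subset\Dc_n$. Moreover, the sequence condition $|C_nC_{n\pm 1}|\le 1$ places the perpendicular bisectors of $C_nC_{n\pm 1}$ within distance $1/2$ of $C_n$, so $V_n\subset R_n:=\Dc_n\cap H_{n,-}\cap H_{n,+}$, where $H_{n,\pm}$ is the half-plane containing $C_n$ bounded by the perpendicular bisector of $C_nC_{n\mp 1}$, and the inward normals of the two bounding lines make an angle equal to $\angle C_{n-1}C_nC_{n+1}\ge 2\pi/3$.

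The geometric heart of the proof is the lemma that every convex hexagon inscribed in such an $R_n$ has area at most $\frac{2+\sqrt 3}{2}$. I would first reduce to $|C_nC_{n\pm 1}|=1$, since enlarging these distances only enlarges $R_n$, and then parametrize by $\alpha:=\tfrac12\angle C_{n-1}C_nC_{n+1}\in[\pi/3,\pi/2]$; the region $R(\alpha)$ is the unit disk with two congruent circular segments of chord-distance $1/2$ removed, symmetric about the internal bisector of $\angle C_{n-1}C_nC_{n+1}$. It suffices to consider inscribed hexagons sharing this symmetry, and these split into two families according to whether two vertices lie on the symmetry axis or none do. A shoelace computation for each family bounds the area by $\frac{2+\sqrt 3}{2}$, with equality attained at $\alpha=\pi/3$ by the hexagon with vertices $(1,0)$, $(-\tfrac12,\pm\tfrac{\sqrt 3}{2})$, $(-\tfrac{\sqrt 3}{2},\pm\tfrac12)$, $(-1,0)$, and at $\alpha=\pi/2$ by the hexagon with vertices $(\pm 1,0)$, $(\pm\tfrac{\sqrt 3}{2},\pm\tfrac12)$---both inscribed in the unit circle. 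I expect this step to be the main technical obstacle: the extremum is realised by two structurally different hexagons at the two endpoints of the allowed $\alpha$-range, and ruling out intermediate configurations requires careful case work.

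Armed with the lemma, the standard Fejes T\'oth averaging finishes the lower bound. Consider the Voronoi cells meeting $\lambda\mathcal I$, whose number differs from $N_\lambda$ by $O(\lambda)$: by Euler's formula for planar tessellations, their average number of sides $m_n$ is at most $6+O(1/N_\lambda)$. Cells with $m_n\le 6$ satisfy $a(V_n)\le a(H_{m_n}(R_n))\le a(H_6(R_n))\le \frac{2+\sqrt 3}{2}$ (using Dowker's theorem-type concavity of $m\mapsto a(H_m(K))$ together with the lemma), while cells with $m_n>6$ satisfy the trivial bound $a(V_n)\le a(R_n)\le \pi/3+\sqrt 3/2$ and are only $O(\lambda)$ in total number since $\sum_n (m_n-6)\le O(\lambda)$. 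Consequently $4\lambda^2 = a(\lambda\mathcal I) \le \sum_n a(V_n) \le N_\lambda\cdot\frac{2+\sqrt 3}{2}+O(\lambda)$; dividing by $4\lambda^2$ and letting $\lambda\to\infty$ yields $\gamma^*(\Dc)\ge\frac{\pi}{(2+\sqrt 3)/2}=\frac{2\pi}{2+\sqrt 3}$.

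For a matching construction I place the centers on the lattice $\Lambda=\mathbb Z(1,0)\oplus\mathbb Z\bigl(\tfrac12,\tfrac{2+\sqrt 3}{2}\bigr)$, whose Dirichlet cells are precisely the extremal hexagons above (of area $\frac{2+\sqrt 3}{2}$, inscribed in the unit circles about lattice points), so the unit disks at lattice points already cover the plane at density $\frac{2\pi}{2+\sqrt 3}$. The sequence enumerates $\Lambda$ row by row, with the $k$th row of length $L_k\to\infty$; within a row consecutive centers are at distance $1$ and $\angle C_{n-1}C_nC_{n+1}=\pi$ at every interior vertex. To move between adjacent rows I insert a short bridge of three auxiliary disks with explicit segment lengths and headings (all turning angles at most $\pi/3$) that terminates on a lattice point of the next row; since only a bounded number of bridge disks is used per row-transition while $L_k\to\infty$, the bridge overhead contributes vanishingly to the density, and the lower density of the resulting sequence covering equals $\frac{2\pi}{2+\sqrt 3}$, establishing tightness.
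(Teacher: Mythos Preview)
The statement you are attacking is Conjecture~\ref{conj}, which the paper explicitly leaves \emph{open}; the paper proves only Theorem~\ref{prop-1}, under the extra hypothesis $\angle C_{n-1}C_nC_{n+1}\ge\frac{2\pi}{3}$. Your argument silently imports exactly that hypothesis: you assert that the inward normals of the two bisectors make an angle ``$\angle C_{n-1}C_nC_{n+1}\ge 2\pi/3$'', you parametrize by $\alpha=\tfrac12\angle C_{n-1}C_nC_{n+1}\in[\pi/3,\pi/2]$, and you even conclude with a bound on $\gamma^*(\Dc)$ rather than on $\gamma'(\Dc)$. Nothing in the conjecture forbids $\angle C_{n-1}C_nC_{n+1}<\frac{2\pi}{3}$, and when that happens the two bounding chords of length $\sqrt3$ cross inside $\Dc_n$, so $R_n$ is no longer (a subset of) any $\M'_\theta$; Section~4 of the paper discusses precisely this obstruction and notes that the per-cell inequality can genuinely fail. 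So as a proof of the conjecture, your core geometric lemma is unjustified at the very point that distinguishes the conjecture from the theorem.

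Read instead as a proof of Theorem~\ref{prop-1}, your outline is close in spirit to the paper's (Voronoi cells inside $\M'_\theta$, a hexagon bound, Euler's formula, Dowker concavity), and your tightness construction via the lattice $\mathbb Z(1,0)\oplus\mathbb Z\bigl(\tfrac12,\tfrac{2+\sqrt3}{2}\bigr)$ with short bridges is a legitimate variant of the paper's dodecagonal-layer construction. However, one step of your averaging is wrong: you claim the cells with $m_n>6$ number only $O(\lambda)$ because $\sum_n(m_n-6)\le O(\lambda)$. That sum bounds the total \emph{excess}, not the number of contributing cells; a tiling with $N_\lambda/2$ heptagons and $N_\lambda/2$ pentagons has zero excess yet half its cells have $m_n>6$. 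Since your trivial bound $a(R_n)\le\frac{\pi}{3}+\frac{\sqrt3}{2}$ exceeds $\frac{2+\sqrt3}{2}$, you cannot absorb those cells. The paper handles this correctly by bounding every cell by $a^*(\nu_n)$ (the largest $\nu_n$-gon in $\M$), invoking Dowker's theorem to make $\nu\mapsto a^*(\nu)$ concave, and then applying Jensen's inequality to $\tfrac1N\sum_n\nu_n\le 6$; you should do the same rather than splitting off the high-degree cells.
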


In Section~3 we consider the following problem closely related to Problem~\ref{prob-1}. 
\begin{PROBLEM}\normalfont\label{prob-2}
	What is the optimal lower density of covering the plane by closed unit circular disks with the constraint that each disk contains the centers of at least two other disks? Does there exist a covering with the aforementioned constraint attaining the optimal lower density?
\end{PROBLEM}
We prove the following result in case of \emph{lattice coverings} (i.e. when centers of the disks form a lattice in ${\mathbb R}^2$).
\begin{THEOREM}\label{thm1.7}
	The optimal lower density of covering of the plane by closed unit circular disks with the constraint that the centers of the disks form a lattice in ${\mathbb R}^2$ and each disk contains the centers of at least two other disks is $\frac{2\pi}{2 + \sqrt{3}}$. The optimal lower density can be achieved by a suitable lattice covering with given constraint.
\end{THEOREM}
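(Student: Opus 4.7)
The plan is first to exhibit a lattice that realizes the density $2\pi/(2+\sqrt{3})$, and then to show that no lattice satisfying both constraints can do better. For the construction, I take the lattice $\Lambda$ spanned by $v_1 = (1,0)$ and $v_2 = (1/2,(2+\sqrt{3})/2)$. Since $\pm v_1 \in \Lambda$ are nonzero lattice vectors of norm exactly $1$, translation invariance ensures every unit disk centered at a lattice point contains the centers of at least two other disks. The fundamental parallelogram has area $(2+\sqrt{3})/2$, giving density $2\pi/(2+\sqrt{3})$. To see that the unit disks cover $\mathbb{R}^2$, I compute the covering radius: the Delaunay triangulation of $\Lambda$ consists of congruent isosceles triangles with sides $1,\sqrt{2+\sqrt{3}},\sqrt{2+\sqrt{3}}$ and area $(2+\sqrt{3})/4$, and the standard formula $R = abc/(4S)$ gives their common circumradius as exactly $1$.

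For the lower bound, let $\Lambda$ be any lattice satisfying both constraints. Since $-v \in \Lambda$ whenever $v \in \Lambda$ and $|v|=|-v|$, the two-centers condition is equivalent to the existence of a nonzero $v \in \Lambda$ with $|v| \le 1$. Choose $v_1$ to be a shortest such vector, set $a = |v_1| \le 1$, rotate so that $v_1 = (a,0)$, and complete to a reduced basis with $v_2 = (x,y)$, $0 \le x \le a/2$, $y > 0$. The shortest-vector condition $|v_2|^2 \ge a^2$ combined with $x \le a/2$ forces $y \ge a\sqrt{3}/2$, so the triangle $\triangle(0,v_1,v_2)$ is acute; since the other five Delaunay triangles around the origin are congruent to this one, the covering radius of $\Lambda$ equals its circumradius, which simplifies (via $|v_1|=a$ and area $ay/2$) to $R = |v_2|\,|v_2-v_1|/(2y)$. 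The covering condition $R \le 1$ therefore reads
\[
(x^2+y^2)\bigl((a-x)^2+y^2\bigr)\le 4y^2,
\]
and since the density of the covering is $\pi/(ay)$, it suffices to show $ay \le (2+\sqrt{3})/2$ under this constraint together with $0 < a \le 1$.

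For the optimization I fix $a$ and treat the constraint as a quadratic inequality in $y^2$: writing $s = x^2+(a-x)^2$ and $p = x^2(a-x)^2 = q^2$ with $q = x(a-x)$, the largest admissible value of $y^2$ equals $\tfrac{1}{2}\bigl[(4-s)+\sqrt{(4-s)^2-4p}\bigr]$. A direct differentiation in $x$ shows that any interior critical point on $(0,a/2)$ would satisfy $\sqrt{(4-s)^2-4p}=2q-(4-s)$, which after substituting $s = a^2-2q$ collapses to $a^2\ge 4$; since $a \le 1$, no such interior critical point exists, so the maximum over $x$ is attained at the symmetric endpoint $x=a/2$. There the constraint reduces to $a^2/4+y^2 \le 2y$, giving $y \le 1+\sqrt{1-a^2/4}$, and an elementary check shows that $a\bigl(1+\sqrt{1-a^2/4}\bigr)$ is strictly increasing in $a$ on $(0,1]$; its maximum is $(2+\sqrt{3})/2$, attained at $a=1$. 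The main obstacle is the interior-critical-point analysis ruling out non-isosceles optima; once that is in hand, the remaining argument is a routine one-variable optimization, and achievability was already settled by the explicit lattice above.
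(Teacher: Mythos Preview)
Your argument is correct, and the explicit lattice you exhibit is the same one the paper uses. However, your route to the lower bound is genuinely different from the paper's.

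The paper argues as follows. Having fixed a shortest nonzero lattice vector $v_1=(2\alpha,0)$ with $\alpha\le 1/2$, it considers the point $P=(\alpha,\sqrt{1-\alpha^2}+\epsilon)$ lying just above the intersection of the two unit circles centred at $0$ and $v_1$. This point is not covered by any disk centred on the $x$-axis, so some lattice point $w=(\beta_1,\beta_2)$ off the axis must satisfy $|P-w|\le 1$, forcing $|\beta_2|\le 1+\sqrt{1-\alpha^2}+\epsilon$. Hence the parallelogram on $v_1,w$ has area at most $2\alpha(1+\sqrt{1-\alpha^2}+\epsilon)$, and since $\det(\Lambda)$ divides this area, one gets $\det(\Lambda)\le 2\alpha(1+\sqrt{1-\alpha^2})$ after letting $\epsilon\to 0$. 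Maximising over $\alpha\in(0,1/2]$ gives $(2+\sqrt{3})/2$. No circumradius formula, no reduced basis, no two-variable constrained optimisation.

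Your approach instead invokes the standard identification of the covering radius with the circumradius of the Delaunay triangle, writes down the covering constraint $(x^2+y^2)((a-x)^2+y^2)\le 4y^2$ explicitly, and solves the resulting optimisation. After eliminating interior critical points you land on the isosceles case $x=a/2$ and the bound $ay\le a\bigl(1+\sqrt{1-a^2/4}\bigr)$, which under the substitution $a=2\alpha$ is exactly the paper's function $2\alpha(1+\sqrt{1-\alpha^2})$. So the two arguments converge to the same final one-variable problem; the paper reaches it in one geometric step, while you reach it through the lattice-covering machinery. What you gain is a self-contained verification that the extremal lattice really does cover (your circumradius computation gives $R=1$ on the nose), which the paper asserts only by reference to a figure; what the paper gains is brevity and the avoidance of the Delaunay/reduction formalism.
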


We note that the constraint mentioned in Problem~\ref{prob-2} is comparable to the constraint in Problem~\ref{prob-1}. In Problem~\ref{prob-1}, (second disk onward) each disk in the sequence covering contains the centers of both the previous and the next disk. We also observe that the optimal density for lattice covering of the plane with the constraint mentioned in Problem~\ref{prob-2} (as per Theorem~\ref{thm1.7}) coincides with the optimal lower density mentioned in Conjecture~\ref{conj}. This observation strengthens the case for Conjecture~\ref{conj}.

\section{Covering the plane by sequence of disks each having center in the previous one}
In this section we prove Theorem~\ref{prop-1}. First we prove that for a given covering of the plane by unit circular disks, we can partition the plane into convex bounded polygonal regions in such a way that each disk contains exactly one of the polygons.

\begin{LEMMA}\label{lemma-poly}
	Let $\Pi$ be a convex polygon in $\mathbb {R}^2$ and $(\Dc_1, \ldots, \Dc_N)$ be a finite sequence covering of $\Pi$ by unit circular disks. If $\Dc_n \ne \Dc_m$ for $n \ne m$, 
	then there is a finite sequence of convex polygons $(\Ply_1,\ldots, \Ply_N)$ such that
	\begin{enumerate}[(i)]
		\item for all $n \in \{1,\ldots,N\}$, $\Ply_n \subset \Dc_n$;
		\item $\Pi = \bigcup_{n=1}^N \Ply_n$;
		\item for all $n,m \in \{1,\ldots, N\}$ and $n \ne m$, $\INT (\Ply_n) \cap \INT (\Ply_m) = \emptyset$.
	\end{enumerate}
	\textnormal {(Note that $\Ply_n$ is allowed to be the empty set for some $n \in \{1,\ldots N\}$.)}
\end{LEMMA}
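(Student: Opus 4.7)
The natural construction is the Voronoi decomposition of the plane induced by the centers $C_1, \dots, C_N$ of the disks, restricted to $\Pi$. Since distinct congruent unit disks have distinct centers, the hypothesis $\Dc_n \neq \Dc_m$ for $n \neq m$ guarantees that the $C_n$ are pairwise distinct, so the Voronoi diagram is well defined. For each $n \in \{1,\dots,N\}$ I would set
$$V_n = \{\, p \in \mathbb{R}^2 : |p - C_n| \le |p - C_m| \text{ for all } m \in \{1,\dots,N\}\,\},$$
and then define $\Ply_n = V_n \cap \Pi$. Each $V_n$ is a (possibly unbounded) closed convex polygonal region, being the intersection of finitely many closed half-planes (one for each competitor $C_m$). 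Intersecting with the convex polygon $\Pi$ yields a bounded closed convex polygon $\Ply_n$, possibly empty when $C_n$ is far from $\Pi$.

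Next I would verify the three properties in order. For (i), given $p \in \Ply_n$, the covering hypothesis produces some $m$ with $p \in \Dc_m$, i.e.\ $|p - C_m| \le 1$; since $p \in V_n$, the Voronoi inequality gives $|p - C_n| \le |p - C_m| \le 1$, so $p \in \Dc_n$. This is the one place the covering assumption enters and is the conceptual heart of the argument. For (ii), every $p \in \Pi$ certainly lies in some $V_n$ (take any $n$ minimizing $|p - C_n|$), giving $\Pi \subset \bigcup_n \Ply_n$; the reverse inclusion is built into the definition $\Ply_n \subset \Pi$. For (iii), in the interior of $V_n$ the inequalities $|p - C_n| < |p - C_m|$ are strict for $m \neq n$, so the interiors of the $V_n$ (hence of the $\Ply_n$) are pairwise disjoint.

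There is no serious obstacle: the construction is essentially forced and the verification is routine. The only mild subtlety is that points equidistant from two or more centers land on shared boundary edges of several $\Ply_n$, but this is harmless for (iii) since we only require disjoint interiors, and it does not affect (i) because the Voronoi inequality $|p - C_n| \le |p - C_m|$ is non-strict. Condition (ii) may leave some $\Ply_n$ empty, which the statement explicitly permits.
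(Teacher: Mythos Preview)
Your proposal is correct and follows essentially the same approach as the paper: both construct the Voronoi cells of the centers $C_1,\dots,C_N$, intersect with $\Pi$, and take these as the $\Ply_n$. Your verification of (i) via the inequality chain $|p-C_n|\le |p-C_m|\le 1$ is in fact slightly cleaner than the paper's argument, which phrases the same fact geometrically by observing that the perpendicular bisector of $C_nC_m$ is the line through the common chord of $\Dc_n$ and $\Dc_m$.
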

\begin{proof}
	We construct a Voronoi diagram \cite{weisensteinVoronoiDiagram} by setting the boundary of $\Pi$ as the boundary of the diagram and the centers of the circular disks as the prescribed points (known as \emph{seeds} or \emph{generators}). Suppose that for all $n \in \{1,\ldots,N\}$, $C_n$ is the center of the disk $\Dc_n$ and  
	for all $x \in \Pi$, $d_n(x)$ denotes the Euclidean distance between $x$ and $C_n$. For all $n \in \{1,\ldots,N\}$, we define a set $\Ply_n \subset \Pi$ (called the Voronoi cell associated to the seed $C_n$) as follows
	$$\Ply_n = \left\{x \in \Pi : d_n(x) = \min \{d_1(x), \ldots , d_N(x)\}\right\}.$$
	We refer to Figure~\ref{fig-voro} for an illustration.
	 
	 	\begin{figure}[!ht]
	 	\centering
	 	\[\begin{tikzpicture}
	 	\begin{scope}[scale=0.8, transform shape]		
		
	 	\draw[fill=yellow, fill opacity=0.2](0.82,0.96)--(-0.7,.95)--(-1.33,-0.21)--(-.25,-1.35)--(.25,-1.35)--(1.23,-.6)--(0.82,0.96);
	 	
	 	\draw[fill=brown, fill opacity=0.2](0.82,0.96)--(-0.7,.95)--(-1.42,2.25)--(-.8,2.4)--(.69,2.18)--(1.25,1.61)--(0.82,0.96);
	 	
	 	\draw[fill=gray, fill opacity=0.2](1.23,-.6)--(0.82,0.96)--(1.25,1.61)--(2.48,.39)--(2.28,-.84)--(1.23,-.6);
	 	
	 	\draw[fill=teal, fill opacity=0.2](.25,-1.35)--(1.23,-.6)--(2.28,-.84)--(2,-2.48)--(1.06,-2.88)--(.25,-1.35);
	 	
	 	\draw[fill=pink, fill opacity=0.2](1.06,-2.88)--(.25,-1.35)--(-.25,-1.35)--(-1.33,-2.81)--(0.15,-3.26)--(1.06,-2.88);
	 	
	 	\draw[fill=green, fill opacity=0.2](-.25,-1.35)--(-1.33,-2.81)--(-2.57,-2.44)--(-3,-1.72)--(-1.76,-0.25)--(-1.33,-0.21)--(-.25,-1.35);
	 	
	 	\draw[fill=orange, fill opacity=0.2](-3,-1.72)--(-1.76,-0.25)--(-3.23,0.94)--(-3.91,-.31)--(-3,-1.72);
	 	
	 	\draw[fill=cyan, fill opacity=0.2](-1.76,-0.25)--(-3.23,0.94)--(-2.67,1.94)--(-1.42,2.25)--(-0.7,0.95)--(-1.33,-.21)--(-1.76,-0.25);

\draw[blue, thick](2.48,0.39)--(0.69,2.18)--(-0.8,2.4)--(-2.67,1.94)--(-3.91,-0.31)--(-2.57,-2.44)--(0.15,-3.26)--(2,-2.48)--(2.48,0.39);

\node[circle, draw, red, fill=red, inner sep=0pt, minimum size=1pt] at (0,0) {};
\draw[red, densely dashed] (0,0) circle (1.38);

\node[circle, draw, red, fill=red, inner sep=0pt, minimum size=1pt] at (2.09,0.49) {};
\draw[red, densely dashed] (2.09,0.49) circle (1.38);

\node[circle, draw, red, fill=red, inner sep=0pt, minimum size=1pt] at (-0.06,1.96) {};
\draw[red, densely dashed] (-0.06,1.96) circle (1.38);

\node[circle, draw, red, fill=red, inner sep=0pt, minimum size=1pt] at (-1.88,0.95) {};
\draw[red, densely dashed] (-1.88,0.95) circle (1.38);

\node[circle, draw, red, fill=red, inner sep=0pt, minimum size=1pt] at (-2.95,-0.36) {};
\draw[red, densely dashed] (-2.95,-0.36) circle (1.38);

\node[circle, draw, red, fill=red, inner sep=0pt, minimum size=1pt] at (-1.64,-1.46) {};
\draw[red, densely dashed] (-1.64,-1.46) circle (1.38);

\node[circle, draw, red, fill=red, inner sep=0pt, minimum size=1pt] at (0.06,-2.7) {};
\draw[red, densely dashed] (0.06,-2.7) circle (1.38);

\node[circle, draw, red, fill=red, inner sep=0pt, minimum size=1pt] at (1.49,-1.96) {};
\draw[red, densely dashed] (1.49,-1.96) circle (1.38);

	 	\end{scope}
	 	\end{tikzpicture}\]
	 	\caption{Eight circular disks covering a convex octagon and the Voronoi cells associated to the centers of the disks}
	 	\label{fig-voro}
	 \end{figure}
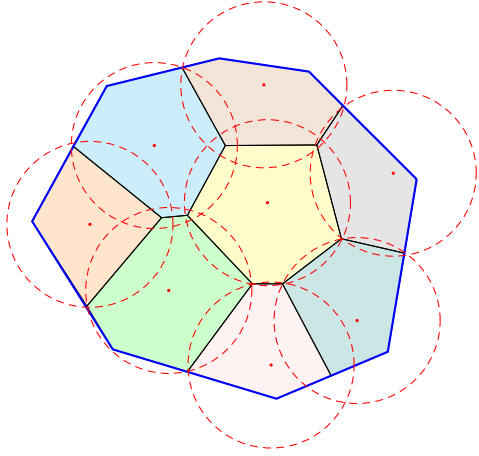
	 
	 We note that $\Ply_n$ may be the empty set for some $n$ (only if $C_n$ lies outside $\Pi$) and we also replace $\Ply_n$ by the empty set if $\Ply_n$ contains no interior points of $\Pi$.
	 
	 From the properties of Voronoi cells (in a finite dimensional Euclidean space), it follows that each $\Ply_n$ is a convex polygon (or the empty set). Also $\cup_{n=1}^N \Ply_n = \Pi$ and for $n,m \in \{1,\ldots,N\}$ with $n \ne m$, $\INT (\Ply_n) \cap \INT (\Ply_m) = \emptyset$. 
	 
	 Let $\Dc_n$ and $\Dc_m$ be two distinct disks whose interiors intersect. 
	It follows that $\Ply_n$ is contained in one of the two closed half-planes induced by 
	the line corresponding to the common chord of $\Dc_n$ and $\Dc_m$ and $\Ply_m$ is contained in the other closed half-plane. This implies, since $\{\Dc_1, \ldots, \Dc_N\}$ covers $\Pi$, $\Ply_n \subset \Dc_n$ for all $n \in \{1,\ldots,N\}$.
\end{proof}

We now consider a convex subset of a closed unit circular disk, whose boundary consists of 
	\begin{enumerate}[(i)]
	\item any pair of chords of the unit circle (boundary of the circular disk), of length $\sqrt 3$ each, sharing a common endpoint and
	\item the minor arc of the unit circle between the other two endpoints (i.e. other than the common endpoint) of the aforementioned two chords.
	\end{enumerate}
We denote the compact, convex subset of the closed unit disk as described above by $\M$ (\emph{up to congruence}). For an illustration, we refer to Figure~\ref{fig-M}. Let $A_0B_0$ and $A_0B'_0$ be any two chords of the unit circle (on the left) of length $\sqrt{3}$ each. $\M$ is the convex, compact set whose boundary consists of the chords $A_0B_0$ and $A_0B'_0$ and the (minor) arc $B_0B'_0$ of the circle.

Next we consider a convex subset of a closed unit circular disk, whose boundary consists of 
\begin{enumerate}[(i)]
	\item any two chords of the unit circle, of length $\sqrt 3$ each, that do not cross each other (they may at most meet at a point on the circle), where the angle subtended by the line segment joining the centers of the two chords from the center of the disk is $\theta$;
	\item the (minor) arc between an ``alternative-pair" of endpoints (i.e. one endpoint from each chord) such that the distance between the pair is the least among all possible alternative-pairs (note that the arc may be of length 0, possible if and only if the chords meet at a point);
	\item the (minor) arc between the other two endpoints (i.e. other than the closest alternative-pairs as mentioned in (ii)) of the aforementioned two chords.
\end{enumerate}
We denote the compact, convex subset of the unit disk as described above by $\M'_\theta$ (up to congruence). For an illustration, we again refer to Figure~\ref{fig-M}. Let $AB$ and $A'B'$ be any two non-crossing chords of the unit circle (on the right) of length $\sqrt{3}$ each and $C$ be the center of the circle. If $D$ and $D'$ are the midpoints of the chords $AB$ and $A'B'$ respectively, then we have $\theta = \angle DCD'$. $\M'_\theta$ is the convex, compact set whose boundary consists of the chords $AB$ and $A'B'$ and the (minor) arcs $AA'$ and $BB'$ of the circle. 
	
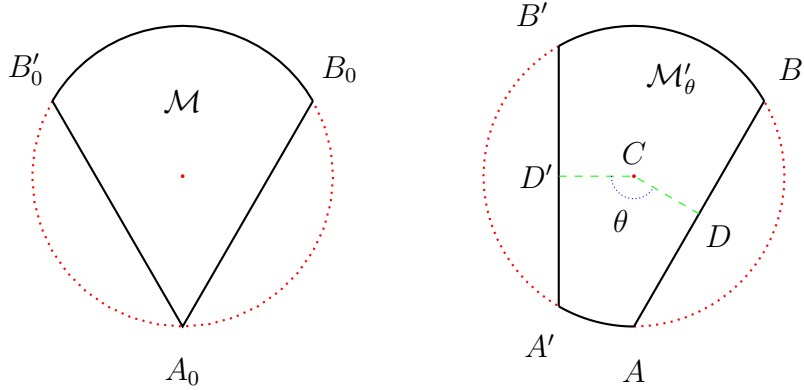
\begin{figure}[!ht]
	
	\centering
	\[\begin{tikzpicture}
	\node[circle, draw, red, fill=red, inner sep=0pt, minimum size=1pt] at (-6,0) {};	
	
	\draw [thick,domain=30:150] plot ({-6+2*cos(\x)}, {2*sin(\x)});
	
	\draw [dotted,red,thick,domain=-210:30] plot ({-6+2*cos(\x)}, {2*sin(\x)});
	
	\draw [thick](-4.268,1)--(-6,-2)--(-7.732,1);
	
	\node at ($(-6,0)+(90:1 and 1)$) {$\M$};
	
	\node at ($(-6,-2)+(-90:0.5 and .6)$) {$A_0$};
	\node at ($(-4.268,1)+(45:0.5 and .6)$) {$B_0$};
	\node at ($(-7.732,1)+(135:0.5 and .6)$) {$B'_0$};
	
	\node[circle, draw, red, fill=red, inner sep=0pt, minimum size=1pt] at (0,0) {};
	
	\draw [thick,domain=30:120] plot ({2*cos(\x)}, {2*sin(\x)});
	\draw [thick,domain=-120:-90] plot ({2*cos(\x)}, {2*sin(\x)});
	
	\draw [dotted,red,thick,domain=-90:30] plot ({2*cos(\x)}, {2*sin(\x)});
	\draw [dotted,red,thick,domain=120:240] plot ({2*cos(\x)}, {2*sin(\x)});
	
	\draw [thick](1.732,1)--(0,-2);
	\draw [thick](-1,1.732)--(-1,-1.732);
	
	\draw [dashed,green](-1,0)--(0,0)--(0.866,-0.5);
	
	\draw [densely dotted,blue,domain=180:330] plot ({0.3*cos(\x)}, {0.3*sin(\x)});
	\node at ($(0,0)+(-100:1 and .6)$) {$\theta$};
	
	\node at ($(0,0)+(60:1 and 1.5)$) {$\M'_\theta$};
	
	\node at ($(0,-2)+(-90:0.5 and .6)$) {$A$};
	\node at ($(1.732,1)+(45:0.5 and .6)$) {$B$};
	\node at ($(-1,-1.732)+(-120:0.5 and .6)$) {$A'$};
	\node at ($(-1,1.732)+(135:0.5 and .6)$) {$B'$};
	
	\node at ($(0,0)+(90:1 and .3)$) {$C$};
	\node at ($(0.866,-0.5)+(-30:0.3 and .6)$) {$D$};
	\node at ($(-1,0)+(180:0.3 and .3)$) {$D'$};
	\end{tikzpicture}\]
	\caption{Convex, compact sets $\M$ and $\M'_\theta$}
	\label{fig-M}
\end{figure}

Since the pair of chords which are parts of the boundary of $\M'_\theta$ don't cross each other, we have $\frac{2\pi}{3} \le \theta \le \pi$. Also we have $\M'_{\frac{2\pi}{3}} = \M$ and for $\frac{2\pi}{3} \le \theta \le \pi$, $a(\M'_\theta) = a(\M)$.

\begin{OBSERVATION}\label{obs-1}\normalfont
	Let us consider the convex, compact set $\M'_\theta$ for an arbitrary $\theta \in \left[\frac{2\pi}{3}, \pi\right]$. Let $\M'_\theta$ be bounded by the chords $AB$ and $A'B'$ and the (minor) arcs $AA'$ and $BB'$ as illustrated in Figure~\ref{fig-M} and $\Ply$ be a proper $\nu$-gon of the largest area inscribed in $\M'_\theta$. We have the following.
	\begin{enumerate}[(i)]
	\item $\Ply$ has vertices on both the line segments $AB$ and $A'B'$.
	\item If $\Ply$ has two vertices on the line segment $AB$ (or $A'B'$), then the points $A$ and $B$ ($A'$ and $B'$) are the two vertices of $\Ply$ on $AB$.
	\item If $\Ply$ has exactly one vertex $Q$ on the line segment $AB$ (or $A'B'$), 
	then there exists a proper $\nu$-gon $\Ply'$ with the same area inscribed in $\M'_\theta$ such that $\Ply'$ has exactly one of $A$ or $B$ as its vertex.
	Let $P$ and $R$ be the neighbors of $Q$ (considering the $\nu$-gon $\Ply$ as a cycle graph), then $\triangle PQR$, taking $PR$ to be the base, attains the maximum height (and hence maximum area) when $Q$ is one of the endpoints on $AB$, unless $PR$ is parallel to $AB$. 
	In that case, we replace the vertex $Q$ of $\Ply$ by any of the points $A$ or $B$ to obtain $\Ply'$.
	\item If $\Ply$ has at least two vertices on the arc $AA'$ (or $BB'$), then $A$ and $A'$ ($B$ and $B'$ ) are two vertices of $\Ply$. Otherwise let $A$ not be a vertex of $\Ply$ and $Q$ and $R$ (with $Q \ne A$) be the ``first" two vertices of $\Ply$ on the arc  $AA'$ in a cyclic order (from $A$ toward $A'$) of the vertices $\Ply$. Also let $P$ (with $P \ne A$) be the vertex ``preceding" $Q$ in the same cyclic order. 
	From (i) -- (iii) it follows that $P$ is the only vertex of $\Ply$ on $AB$ and we may replace the vertex $P$ of $\Ply$ by the point $B$ to obtain a proper $\nu$-gon $\Ply'$ with the same area inscribed in $\M'_\theta$. We note that $\triangle BQR$, taking $BR$ to be the base, attains its maximum height (and hence maximum area) when $Q$ coincides with $A$. Thus by replacing the vertex $Q$ of $\Ply'$ by the point $A$, we obtain another proper $\nu$-gon $\Ply''$ inscribed in $\M'_\theta$ with $a(\Ply'') > a(\Ply')=a(\Ply)$. This leads to a contradiction to the assumption that $\Ply$ is a largest $\nu$-gon inscribed in $\M'_\theta$.
\end{enumerate}
\end{OBSERVATION}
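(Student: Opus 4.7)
The plan is to prove parts (i)--(iv) of Observation~\ref{obs-1} in that order, using throughout a \emph{variational principle}: the area $a(\Ply)$, viewed as a function of the position of a single vertex with all other vertices held fixed, is affine. Consequently, along a straight segment of $\partial \M'_\theta$ the area is either strictly maximised at an endpoint of the segment or is constant along it (the latter exactly when the segment is parallel to the chord joining the vertex's two neighbours); along a circular arc, an interior critical point requires the arc's tangent at the vertex to be parallel to that chord.

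For (i), I would argue by contradiction. Suppose $\Ply$ has no vertex on $AB$, and let $V_j$ denote the vertex of $\Ply$ minimising perpendicular distance to $AB$. Since $V_{j-1}$ and $V_{j+1}$ are strictly farther from $AB$ than $V_j$, a short geometric argument places $V_j$ strictly between $AB$ and the chord $V_{j-1}V_{j+1}$, and shows that at least one of the endpoints $A, B$ of $AB$ strictly exceeds $V_j$ in perpendicular distance to the line $V_{j-1}V_{j+1}$. Substituting that endpoint for $V_j$ produces a convex $\nu$-gon inscribed in $\M'_\theta$ (using convexity of $\M'_\theta$) with strictly larger area, contradicting maximality of $\Ply$; the symmetric argument yields a vertex on $A'B'$.

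Parts (ii) and (iii) follow directly from the variational principle applied along the segment $AB$: if $V$ is a vertex of $\Ply$ on $AB$ with neighbours $P$ and $R$, then $a(\Ply)$ is affine in $V$ along $AB$, so maximality forces $V \in \{A, B\}$ unless $PR \parallel AB$, in which case $V$ may be freely replaced by either endpoint of $AB$ without changing the area. Part (ii) follows because the degenerate condition $PR \parallel AB$ cannot hold when one of the neighbours of $V$ is itself on $AB$ (while the other lies strictly off $AB$ inside $\M'_\theta$); part (iii) follows by the explicit replacement indicated in the statement.

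For part (iv), suppose arc $AA'$ contains at least two vertices of $\Ply$ yet $A$ is not a vertex. By (i), (ii), and the hypothesis $A \notin \Ply$, the polygon $\Ply$ has exactly one vertex $P$ on $AB$; by (iii) we may replace $P$ by $B$ to obtain an inscribed $\nu$-gon $\Ply'$ with $a(\Ply') = a(\Ply)$. Letting $Q$ and $R$ be the two vertices of $\Ply'$ on arc $AA'$ closest to $A$ in cyclic order, a direct calculation on the unit circle shows that the perpendicular height from $Q$ to the chord $BR$ is strictly maximised over $Q \in \text{arc }AA'$ at $Q = A$, so replacing $Q$ by $A$ produces a $\nu$-gon with area strictly greater than $a(\Ply)$, contradicting maximality of $\Ply$. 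The principal obstacle is the geometric case analysis needed for part (i); the remaining parts reduce almost mechanically to the variational principle.
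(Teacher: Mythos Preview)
Your approach is essentially the paper's: both rest on the affine dependence of the area on each individual vertex, and your arguments for (ii)--(iv) match the inline justifications embedded in the Observation almost verbatim. The paper in fact offers no argument at all for (i) or (ii), so on those points you are attempting more than the paper does.

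There is, however, a real gap in your argument for (i). You choose $V_j$ to be the vertex nearest the line $AB$ and assert that at least one of $A,B$ lies strictly farther than $V_j$ from the line $V_{j-1}V_{j+1}$. But $\Ply$ is assumed maximal, so the first-order condition already holds at $V_j$: if $V_j$ lies in the interior of one of the circular arcs, the tangent to the unit circle at $V_j$ is parallel to $V_{j-1}V_{j+1}$, which makes $V_j$ the point of the \emph{entire} unit disk---and hence of $\M'_\theta$---farthest from the line $V_{j-1}V_{j+1}$ on the $V_j$-side. In that situation $A$ and $B$ (being on the unit circle) are no farther from that line than $V_j$, so the single-vertex replacement you propose does not increase the area and no contradiction results. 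The paper sidesteps this by simply asserting (i); a complete proof seems to require a more global consideration (for instance, perturbing simultaneously the two consecutive vertices whose edge ``skips over'' the segment $AB$) rather than a one-vertex move.
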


\begin{OBSERVATION}\label{rem-0}\normalfont
	From Observation~\ref{obs-1} it follows that the largest $\nu$-gon inscribed in $\M$, where $\M$ is bounded by the chords $A_0B_0$ and $A'_0B'_0$ and the (minor) arc $B_0B'_0$ as illustrated in Figure~\ref{fig-M}, is the convex $\nu$-gon with vertices $A_0,B_0,B'_0$ and $\nu - 3$ points on the arc $B_0B'_0$ (other than $B_0$ and $B'_0$) that divide the arc into $\nu - 2$ pieces of equal length.
\end{OBSERVATION}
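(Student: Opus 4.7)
My plan is to deduce the observation from Observation~\ref{obs-1} by (a) using parts (ii)--(iv) to pin down the chord vertices, (b) an additional swap argument to force $A_0$ as a vertex, and (c) a concavity argument to establish equal spacing on the arc. Let $\Ply$ denote a largest $\nu$-gon inscribed in $\M$. By Observation~\ref{obs-1}(i)--(iii), we may assume every vertex of $\Ply$ lying on one of the chords $A_0B_0$ and $A_0B'_0$ belongs to $\{A_0, B_0, B'_0\}$, with at least one vertex on each chord. Applying part (iv) with the arc $B_0B'_0$ in the role of arc $BB'$: as soon as $\Ply$ has at least two vertices on this arc, both $B_0$ and $B'_0$ must be vertices; the residual small cases $\nu \in \{3,4\}$ (where fewer arc vertices are present) are dispatched by direct comparison of the finitely many admissible configurations.

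To force $A_0$ to be a vertex I use the following swap. Suppose for contradiction $A_0$ is not a vertex; then the chord vertices of $\Ply$ are exactly $B_0$ and $B'_0$, and the remaining $\nu - 2$ vertices lie on the open arc $B_0B'_0$. Inserting $A_0$ into the cyclic vertex sequence of $\Ply$ between $B'_0$ and $B_0$ produces a convex $(\nu + 1)$-gon $\Ply^+ \subset \M$ with $a(\Ply^+) = a(\Ply) + a(\triangle A_0B_0B'_0) = a(\Ply) + \tfrac{3\sqrt{3}}{4}$. Deleting from $\Ply^+$ any vertex (other than $A_0$) whose two cyclic neighbours both lie on the closed arc $B_0B'_0$ yields a $\nu$-gon $\Ply'$; the area lost equals that of the triangle formed by the deleted vertex together with its two neighbours. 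All three vertices of this triangle lie on a unit-circle arc of central angle $\tfrac{2\pi}{3}$, and a short optimisation (parametrising by angular coordinates) shows every such inscribed triangle has area at most $\tfrac{\sqrt{3}}{4}$. Hence $a(\Ply') \geq a(\Ply) + \tfrac{3\sqrt{3}}{4} - \tfrac{\sqrt{3}}{4} > a(\Ply)$, contradicting the maximality of $\Ply$.

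With $A_0, B_0, B'_0$ all forced as vertices, the remaining $\nu - 3$ vertices $V_1, \ldots, V_{\nu-3}$ of $\Ply$ lie on the open arc $B_0B'_0$. The area of $\Ply$ splits as $a(\triangle A_0B_0B'_0)$ plus the area of the ``cap'' bounded by the chord $B_0B'_0$ and the polyline $B_0V_1\cdots V_{\nu-3}B'_0$. Writing the $\nu - 2$ central angles subtended by consecutive vertex pairs on the cap as $\phi_1, \ldots, \phi_{\nu-2}$ (so $\sum_i \phi_i = \tfrac{2\pi}{3}$), a shoelace computation for polygons inscribed in the unit circle expresses the cap area as a strictly increasing function of $\sum_i \sin\phi_i$. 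By strict concavity of $\sin$ on $[0,\pi]$ and Jensen's inequality, this sum is maximised exactly when $\phi_1 = \cdots = \phi_{\nu-2} = \tfrac{2\pi}{3(\nu - 2)}$, i.e., when the arc is divided into $\nu - 2$ equal pieces, as claimed.

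The main obstacle is the middle step: since the arc $AA'$ of $\M = \M'_{2\pi/3}$ degenerates to the single point $A_0$, Observation~\ref{obs-1}(iv) does not by itself force $A_0$ to be a vertex. The explicit inequality $\tfrac{\sqrt{3}}{4} < \tfrac{3\sqrt{3}}{4} = a(\triangle A_0B_0B'_0)$, which bounds the area of any triangle inscribed in a unit-circle arc of angular width $\tfrac{2\pi}{3}$, is precisely what allows the swap to produce a strict area increase.
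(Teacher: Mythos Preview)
Your proof is correct and in fact supplies considerably more detail than the paper does: in the paper, this Observation is stated without any argument, merely as a consequence of Observation~\ref{obs-1}. You have correctly isolated the one genuinely non-trivial point, namely that for $\M = \M'_{2\pi/3}$ the arc $AA'$ degenerates to the single point $A_0$, so part (iv) of Observation~\ref{obs-1} cannot be invoked on that arc to force $A_0$ as a vertex. Your swap argument (insert $A_0$, delete an arc vertex, compare the $\tfrac{3\sqrt{3}}{4}$ gained against the at most $\tfrac{\sqrt{3}}{4}$ lost) fills this gap cleanly, and the Jensen step for equal spacing of the remaining $\nu-3$ vertices on the arc is standard and correct.

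Two minor remarks. First, your ``residual small cases $\nu \in \{3,4\}$'' are actually subsumed by the general argument: after applying (i)--(iii), every vertex of $\Ply$ other than possibly $A_0$ lies on the closed arc $B_0B'_0$, so for any $\nu \ge 3$ there are at least $\nu-1 \ge 2$ such vertices and part (iv) already yields $B_0,B'_0$ as vertices. Second, there is a shorter route to forcing $A_0$: if $A_0$ is not a vertex then $\Ply$ lies entirely in the circular segment cut off by the chord $B_0B'_0$, whose area is $\tfrac{\pi}{3}-\tfrac{\sqrt{3}}{4}\approx 0.614 < \tfrac{3\sqrt{3}}{4}=a(\triangle A_0B_0B'_0)$, immediately contradicting maximality for every $\nu\ge 3$. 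Your swap argument is nonetheless valid and has the advantage of being more explicit about the polygon produced.
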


Over the next two lemmata we show that a polygonal cell constructed in the proof of Lemma~\ref{lemma-poly}, when the disks abide by the constraints mentioned in Theorem~\ref{prop-1}, is always smaller than the largest polygon with same number of vertices inscribed in $\M$. First we show that even when a largest $\nu$-gon inscribed in $\M'_\theta$, for an arbitrary $\theta \in \left[\frac{2\pi}{3}, \pi\right]$, doesn't fit inside $\M$, we may break the $\nu$-gon into triangular pieces and rearrange them appropriately to obtain a $\nu$-gon (of the same area) that fits inside $\M$.
\begin{LEMMA}\label{M-bigger-poly}
	If $\Ply$ is a proper $\nu$-gon of the largest area inscribed in $\M'_\theta$ for an arbitrary $\theta \in \left[\frac{2\pi}{3}, \pi\right]$, then there is a proper $\nu$-gon $\Ply^*$ contained in $\M$ such that $a(\Ply) \le a(\Ply^*)$.	
\end{LEMMA}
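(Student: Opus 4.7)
My plan is to use Observation~\ref{obs-1} to pin down the combinatorial structure of the largest $\nu$-gon $\Ply$ inscribed in $\M'_\theta$, to compute $a(\Ply)$ in terms of circular-segment cut-offs, and to exhibit a $\nu$-gon $\Ply^*$ in $\M$ for which the inequality $a(\Ply) \le a(\Ply^*)$ reduces to Jensen's inequality for $\sin$.

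By Observation~\ref{obs-1}(i)--(iv), together with an argument analogous to Observation~\ref{rem-0} forcing arc vertices of $\Ply$ (between two fixed chord-endpoint vertices) to be equally spaced, I reduce to the canonical configuration in which all four chord endpoints $A, B, A', B'$ are vertices of $\Ply$ and the remaining $\nu - 4$ vertices are distributed across the two arcs $BB'$ and $AA'$, equally spaced on each. Let $k_1, k_2 \ge 1$ denote the number of $\Ply$-edges subtended by arcs $BB'$ and $AA'$ respectively, so that $k_1 + k_2 = \nu - 2$. Since each chord of length $\sqrt{3}$ in the unit circle subtends a central angle of $\tfrac{2\pi}{3}$, the two arcs have central angles $\beta$ and $\alpha$ satisfying $\alpha + \beta = \tfrac{2\pi}{3}$. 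Summing the circular-segment cut-offs gives
\[
a(\Ply) \;=\; a(\M'_\theta) \;-\; \tfrac{\alpha+\beta}{2} \;+\; \tfrac{1}{2}\bigl(k_1\sin(\beta/k_1) + k_2\sin(\alpha/k_2)\bigr).
\]

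Define $\Ply^*$ to be the $\nu$-gon in $\M$ described by Observation~\ref{rem-0}: vertices $A_0, B_0, B'_0$ and $\nu - 3$ points equally spaced on the arc $B_0B'_0$. Its area is
\[
a(\Ply^*) \;=\; a(\M) \;-\; \tfrac{\pi}{3} \;+\; \tfrac{\nu - 2}{2}\sin\!\left(\tfrac{2\pi}{3(\nu - 2)}\right).
\]
Using $a(\M'_\theta) = a(\M)$, the inequality $a(\Ply) \le a(\Ply^*)$ becomes
\[
k_1\sin(\beta/k_1) + k_2\sin(\alpha/k_2) \;\le\; (k_1 + k_2)\sin\!\left(\tfrac{\alpha + \beta}{k_1 + k_2}\right),
\]
which is precisely Jensen's inequality applied to the concave function $\sin$ on $[0, \pi]$, with weights $k_1, k_2$ and arguments $\beta/k_1, \alpha/k_2 \in [0, 2\pi/3] \subset [0, \pi]$.

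The main obstacle is justifying the reduction to the canonical configuration. Observation~\ref{obs-1}(iii)--(iv) does not immediately produce all four of $A, B, A', B'$ as vertices of $\Ply$: if, for instance, $\Ply$ has $A$ but not $B$ on chord $AB$, then a ``jump-edge'' from $A$ to a vertex $V$ on arc $BB'$ cuts off an extra triangular region $ABV$ beyond the circular-segment cut-offs considered above. To dispatch this, I plan to argue, using the maximality of $\Ply$, that inserting $B$ as a vertex while relocating a less-contributing vertex on arc $AA'$ does not decrease the area (the gained triangular region dominates the lost thin sliver), and handle the small-$\nu$ cases ($\nu = 3, 4$) by direct comparison of the explicit candidate polygons.
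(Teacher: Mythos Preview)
Your Jensen computation for the ``canonical'' configuration (all four of $A,B,A',B'$ are vertices of $\Ply$, the remaining $\nu-4$ equally spaced on the two boundary arcs) is correct and is a genuinely different argument from the paper's for that case. The paper never computes areas: since each triangle $CP_iP_{i+1}$ formed by two consecutive arc-vertices and the centre $C$ depends only on the central angle $\angle P_iCP_{i+1}$, it simply rotates the vertices $P_1,\dots,P_k$ lying on arc $AA'$ onto the circle just past $B'$, preserving every central angle, and obtains a $\nu$-gon $\Ply^*\subset\M$ with $a(\Ply^*)=a(\Ply)$ exactly. Your analytic route and this cut-and-paste both work; yours has the minor bonus that your $\Ply^*$ is the \emph{largest} $\nu$-gon in $\M$ rather than an ad hoc one.

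The non-canonical case is where your proposal has a real gap. Your plan---insert $B$, delete a least-contributing vertex on arc $AA'$, and assert that the gained triangle dominates the lost sliver---must hold against the \emph{best} non-canonical $\Ply$, in which the arc-vertices are equally spaced and no sliver is distinguished as thin; you give no mechanism for the domination, and maximality of $\Ply$ only pulls the other way. The paper sidesteps the comparison entirely. When at least one of $AB,A'B'$ fails to be a side of $\Ply$, Observation~\ref{obs-1} forces one of the two arcs to carry a single $\Ply$-vertex $P_0$ whose cyclic neighbours in $\Ply$ are $B$ and $B'$; every other vertex of $\Ply$ then lies on arc $BB'$, hence between the two chords of length $\sqrt3$ through $P_0$, so $\Ply$ already sits inside the copy of $\M$ with apex $P_0$ and one may take $\Ply^*=\Ply$. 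Replace your relocation plan with this containment observation and the proof closes.
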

\begin{proof}
	Let the boundary of $\M'_\theta$ consist of the line segments $AB$ and $A'B'$ and the arcs of the unit circle (with the center $C$) $AA'$ and $BB'$ as illustrated in Figure~\ref{fig-M}. 
	\indent \emph {Case 1:} $\Ply$ has both $AB$ and $A'B'$ as two of its sides.\\
	In this case we can construct a $\nu$-gon $\Ply^*$ which can be inscribed in $\M$ having the same area as of $\Ply$ by removing all but one vertices of $\Ply$ from one of the arcs and repositioning them along the boundary of the unit circle in an appropriate manner. 
	
	Let $A=P_0, P_1, \ldots, P_k=A'$ be the proper vertices of $\Ply$ on the arc $AA'$ in cyclic order (from $A$ toward $A'$). We remove the points $P_1, \ldots, P_k=A'$ from the arc $AA'$; place them along the (minor) arc $B'A'$ of the unit circle in cyclic order (from $B'$ toward $A'$) and label them as $P_1^*, \ldots, P_k^*$, respectively such that $\angle B'CP_1^* = \angle P_0CP_1 = \angle ACP_1$ and $\angle P_iCP_{i+1} = \angle P_i^*CP_{i+1}^*$ for all $i \in \{1, \ldots, k-1\}$ (as illustrated for $k=2$ in Figure~\ref{fig-poly-switch}).
	
	\begin{figure}[!ht]
	\centering
	\[\begin{tikzpicture}
	
	\draw[loosely dotted,blue,thick,domain=-210:30] plot ({6+2*cos(\x)}, {2*sin(\x)});
	
	\draw [densely dotted,red,thick,domain=30:150] plot ({6+2*cos(\x)}, {2*sin(\x)});
	
	\draw [densely dotted,red,thick](7.732,1)--(6,-2)--(4.268,1);
	\draw [dashed,xshift=6cm](30:2)--(60:2)--(90:2)--(120:2)--(135:2)--(150:2)--(270:2)--(30:2);
	
	\draw[dashed,green,thick,xshift=6cm](0:0)--(30:2);
	\draw[dashed,green,thick,xshift=6cm](0:0)--(60:2);
	\draw[dashed,green,thick,xshift=6cm](0:0)--(90:2);
	\draw[dashed,green,thick,xshift=6cm](0:0)--(120:2);
	\draw[dashed,green,thick,xshift=6cm](0:0)--(135:2);
	\draw[dashed,green,thick,xshift=6cm](0:0)--(150:2);
	\draw[dashed,green,thick,xshift=6cm](0:0)--(270:2);
	
	\node at ($(6,0)+(90:1 and 1)$) {$\Ply^*$};
	
	\node at ($(6,-2)+(-90:0.5 and .6)$) {$A$};
	\node at ($(7.732,1)+(45:0.5 and .6)$) {$B$};
	\node at ($(6,0)+(0:0.5 and .3)$) {$C$};
	
	\node at ($(5,1.732)+(120:0.5 and .3)$) {$B'$};
	\node at ($(4.586,1.414)+(135:0.5 and .3)$) {$P_1^*$};
	\node at ($(4.268,1)+(150:0.5 and .3)$) {$P_2^*$};

	\draw[loosely dotted,blue,thick,domain=120:240] (0,0) plot ({2*cos(\x)}, {2*sin(\x)});
	\draw[loosely dotted,blue,thick,domain=-90:30] (0,0) plot ({2*cos(\x)}, {2*sin(\x)});
	
	\draw [densely dotted,red,thick,domain=30:120] plot ({2*cos(\x)}, {2*sin(\x)});
	\draw [densely dotted,red,thick,domain=-120:-90] plot ({2*cos(\x)}, {2*sin(\x)});
	
	\draw [densely dotted,red,thick](1.732,1)--(0,-2);
	\draw [densely dotted,red,thick](-1,1.732)--(-1,-1.732);
	
	\draw [dashed](30:2)--(60:2)--(90:2)--(120:2)--(240:2)--(255:2)--(270:2)--(30:2);
	
	\draw[dashed,green,thick](0:0)--(30:2);
	\draw[dashed,green,thick](0:0)--(60:2);
	\draw[dashed,green,thick](0:0)--(90:2);
	\draw[dashed,green,thick](0:0)--(120:2);
	\draw[dashed,green,thick](0:0)--(240:2);
	\draw[dashed,green,thick](0:0)--(255:2);
	\draw[dashed,green,thick](0:0)--(270:2);

	\node at ($(0,0)+(60:1 and 1.5)$) {$\Ply$};
	
	\node at ($(0,0)+(0:0.5 and .3)$) {$C$};
	
	\node at ($(0,-2)+(-30:0.8 and .6)$) {$P_0=A$};
	\node at ($(1.732,1)+(45:0.5 and .6)$) {$B$};
	\node at ($(-1,-1.732)+(-160:0.5 and 1)$) {$A'=P_2$};
	\node at ($(-1,1.732)+(135:0.5 and .6)$) {$B'$};
	\node at ($(255:2)+(275:0.8 and .3)$) {$P_1$};
	
	\end{tikzpicture}\]
	\caption{Construction of $\Ply^*$ from $\Ply$}
	\label{fig-poly-switch}
\end{figure}
Hence, if $\Ply^*$ is the convex proper $\nu$-gon whose set of proper vertices is 
$$\{\mbox{all proper vertices of }\Ply \} \cup \{P_1^*, \ldots, P_k^*\} - \{P_1, \ldots, P_k=A'\},$$ 

then $a(\Ply^*) = a(\Ply)$. Also we note that $\Ply^*$ can be inscribed in $\M$ as the compact, convex set whose boundary consists of the line segments $AB$ and $AP_k^*$, and the arc $BB'P_k^*$ is $\M$ (up to congruence).

\indent \emph {Case 2:} At least one of $AB$ and $A'B'$ is not a side of $\Ply$.\\
We note that, by Observation~\ref{obs-1}, in this case $\Ply$ has only one vertex, say $P$, on one of the arcs $AA'$ and $BB'$, without loss of generality, say on $AA'$. It also follows from Observation~\ref{obs-1} that the neighbors of $P$ (considering the proper $\nu$-gon $\Ply$ as a cycle graph) are $B$ and $B'$. We observe that $\Ply$ itself can be inscribed in $\M$. $\Ply$ is a subset of 
the compact, convex set whose boundary consists of the pair of chords of length $\sqrt{3}$ with a common endpoint in $P$ (as illustrated by dashed line segments in Figure~\ref{fig-poly-thin}) and the (minor) arc joining the other two endpoints of these two chords and this set, up to congruence, is $\M$.

	\begin{figure}[!ht]
	\centering
	\[\begin{tikzpicture}
	\begin{scope}[scale=1.2, transform shape]		
	
	\draw[loosely dotted,blue,thick,domain=140:260] (0,0) plot ({2*cos(\x)}, {2*sin(\x)});
	\draw[loosely dotted,blue,thick,domain=-80:40] (0,0) plot ({2*cos(\x)}, {2*sin(\x)});
	\draw [densely dotted,red,thick,domain=40:140] plot ({2*cos(\x)}, {2*sin(\x)});
	\draw [densely dotted,red,thick,domain=-100:-80] plot ({2*cos(\x)}, {2*sin(\x)});
	
	\draw[densely dotted,red,thick](140:2)--(260:2);
	\draw[densely dotted,red,thick](-80:2)--(40:2);
	
	\draw [](140:2)--(270:2)--(40:2)--(65:2)--(90:2)--(115:2)--(140:2);
	\draw [dashed,green,thick](150:2)--(270:2)--(30:2);
	
	\node at ($(0,0)+(60:1 and 1.5)$) {$\Ply$};
	
	\node at ($(-90:2)+(-90:0.5 and .3)$) {$P$};
	\node at ($(-80:2)+(-80:0.5 and .3)$) {$A$};
	\node at ($(1.732,1)+(80:0.5 and .6)$) {$B$};
	\node at ($(-100:2)+(-100:0.5 and .3)$) {$A'$};
	\node at ($(-1.732,1)+(100:0.5 and .6)$) {$B'$};
	
	\end{scope}
	\end{tikzpicture}\]
	\caption{$\Ply$ is bounded between chords of length $\sqrt{3}$ that meet at $P$ (dashed lines)}
	\label{fig-poly-thin}
\end{figure}
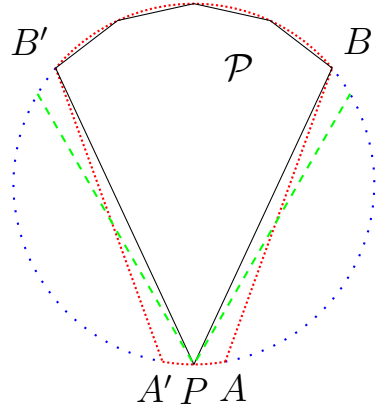
\end{proof}

Let $\Pi$ be any convex polygon in $\mathbb {R}^2$ and $(\Dc_1, \ldots, \Dc_N)$ be a finite sequence covering of $\Pi$ by closed unit circular disks such that for all $n \in \{1, \ldots, N\}$, $C_n$ is the center of the disk $\Dc_n$. Also suppose that for all $n \in \{2, \ldots, N\}$, $C_n \in \Dc_{n-1}$ and for all $n \in \{2, \ldots, N-1\}$, $\angle C_{n-1}C_nC_{n+1} \ge \frac{2\pi}{3}$. 

Let $\Ply_n$, for $n \in \{1, \ldots, N\}$, be the Voronoi cell (a convex polygon) associated to $C_n$ obtained from the Voronoi diagram with the boundary of $\Pi$ as the boundary of the diagram and $C_1, \ldots, C_N$ as the generators (refer to Lemma~\ref{lemma-poly}). Suppose that for all $n \in \{1, \ldots, N\}$, the $\Ply_n$ has $\nu_n$ proper vertices.

From the notion of Voronoi cells, we note that the cell $\Ply_n$ ($\subset \Dc_n$) is bounded between the common chord of $\Dc_{n-1}$ and $\Dc_{n}$, and the common chord of $\Dc_{n}$ and $\Dc_{n+1}$. This, in turn, implies $\Ply_n$ can be fit inside $\M'_\theta$ for some $\theta \in \left[\frac{2\pi}{3}, \pi\right]$ (in fact, $\theta = \angle C_{n-1}C_nC_{n+1}$ in this case).
Then by using Lemma~\ref{M-bigger-poly}, we have the following bound on $a(\Ply_n)$.
\begin{LEMMA}\label{lemma-bigger-poly}
	If, for all $n \in \{2, \ldots, N-1\}$, $\Ply^*_{\nu_n}$ is the $\nu_n$-gon with the largest area inscribed in $\M$, then $a(\Ply_n) \le a(\Ply^*_{\nu_n})$.
		
\end{LEMMA}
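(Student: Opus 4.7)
My plan is to chain two elementary observations together with Lemma~\ref{M-bigger-poly}. The paragraph preceding the statement has already shown that, up to a rigid motion, $\Ply_n \subset \M'_\theta$ with $\theta = \angle C_{n-1}C_nC_{n+1} \in \left[\frac{2\pi}{3}, \pi\right]$, so I take this containment as given and only have to transfer it into the desired area bound.

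First I would establish the following standard fact: for any compact convex set $K \subset \mathbb{R}^2$ and any $\nu \ge 3$, the supremum of the area of a convex $\nu$-gon contained in $K$ is attained, and any area-maximal $\nu$-gon is inscribed in $K$ (all vertices on $\partial K$). Existence is the usual compactness argument on the space of convex $\nu$-gons in $K$. For the inscribed conclusion, if an area-maximal $\nu$-gon had a vertex $v_i \in \INT(K)$, translating $v_i$ perpendicular to the chord $v_{i-1}v_{i+1}$ away from it would remain in $K$ for small perturbations (since $\INT(K)$ is open), preserve convexity at all vertices, and strictly increase the area, contradicting maximality. Applying this with $K = \M'_\theta$ to the convex $\nu_n$-gon $\Ply_n$ yields
\[
a(\Ply_n) \;\le\; a(\widetilde{\Ply}),
\]
where $\widetilde{\Ply}$ denotes a $\nu_n$-gon of largest area inscribed in $\M'_\theta$.

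Second, I would invoke Lemma~\ref{M-bigger-poly} on $\widetilde{\Ply}$: there exists a $\nu_n$-gon $\Ply'$ contained in $\M$ with $a(\widetilde{\Ply}) \le a(\Ply')$. Since $\Ply^*_{\nu_n}$ is by definition a $\nu_n$-gon of largest area inscribed in $\M$, we have $a(\Ply') \le a(\Ply^*_{\nu_n})$. Concatenating the bounds gives the claimed inequality $a(\Ply_n) \le a(\Ply^*_{\nu_n})$.

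I do not anticipate a substantive obstacle, since the containment $\Ply_n \subset \M'_\theta$ is already in hand and Lemma~\ref{M-bigger-poly} is tailored precisely for this transfer step. The only minor item of care is the degenerate regime $\nu_n < 3$ (when $\Ply_n$ is empty, a point, or a segment), where $a(\Ply_n) = 0$ and the inequality is vacuous and can be dismissed in a line.
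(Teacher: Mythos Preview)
Your proposal is correct and follows essentially the same logical chain as the paper: $\Ply_n \subset \M'_\theta$ implies $a(\Ply_n)$ is at most the area of a largest $\nu_n$-gon in $\M'_\theta$, and then Lemma~\ref{M-bigger-poly} transfers this bound to $\M$. The one point worth noting is that the paper's proof devotes almost all of its effort to \emph{justifying} the containment $\Ply_n \subset \M'_\theta$ (via the observation that the common chords of $\Dc_{n-1},\Dc_n$ and $\Dc_n,\Dc_{n+1}$ have length at least $\sqrt{3}$, replacing them by parallel chords of length exactly $\sqrt{3}$, and checking these do not cross because $\angle C_{n-1}C_nC_{n+1}\ge \tfrac{2\pi}{3}$); the preceding paragraph only \emph{announces} that containment, so by taking it as given you are outsourcing precisely the substantive geometric step, while your added compactness/inscription argument is routine and not really needed for the area comparison.
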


\begin{proof}
	From the construction of the convex $\nu_n$-gon $\Ply_n$ that lies inside $\Dc_n$ (for $n \in \{2, \ldots, N-1\}$) in the proof of Lemma~\ref{lemma-poly}, it follows that  $\Ply_n$ is bounded between the common chord of $\Dc_{n-1}$ and $\Dc_{n}$, and the common chord of $\Dc_{n}$ and $\Dc_{n+1}$ (as illustrated by solid line segments in Figure~\ref{fig-bdd}). Since, $C_n$, the center of $\Dc_{n}$, lies in $\Dc_{n-1}$, the length of the common chord of $\Dc_{n-1}$ and $\Dc_{n}$ is at least $\sqrt{3}$ (and the same is true for the common chord of $\Dc_{n}$ and $\Dc_{n+1}$). 
	Now we consider the following pair of chords of $\Dc_n$ of length $\sqrt{3}$ each: 
	\begin{enumerate}[(i)]
		\item one that is parallel to the common chord of $\Dc_{n-1}$ and $\Dc_{n}$ and that lies in the minor segment of $\Dc_n$ induced by the common chord of $\Dc_{n-1}$ and $\Dc_{n}$;
		\item one that is parallel to the common chord of $\Dc_{n}$ and $\Dc_{n+1}$ and that lies in the minor segment of $\Dc_n$ induced by the common chord of $\Dc_{n}$ and $\Dc_{n+1}$.
	\end{enumerate}
	(these two chords are illustrated by the dash-dotted line segments in Figure~\ref{fig-bdd}).

	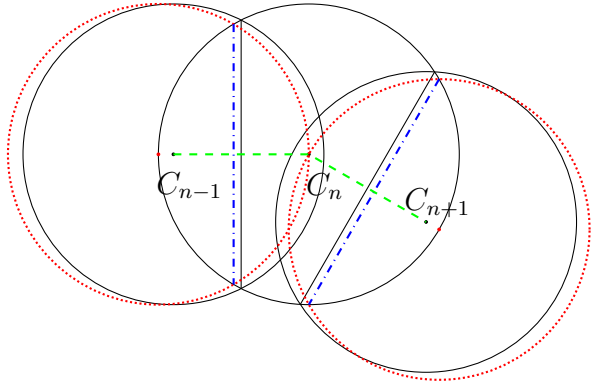
\begin{figure}[!ht]
	\centering
	\[\begin{tikzpicture}
	\node[circle, draw, fill=black, inner sep=0pt, minimum size=1pt] at (-1.8,0) {};
	\draw[] (-1.8,0) circle (2);
	
	\node[circle, draw, fill=black, inner sep=0pt, minimum size=1pt] at (0,0) {};
	\draw[] (0,0) circle (2);
	
	\node[circle, draw, fill=black, inner sep=0pt, minimum size=1pt] at (1.5588,-0.9) {};
	\draw[] (1.5588,-0.9) circle (2);
	
	\draw [](-0.9,1.786)--(-0.9,-1.786);
	\draw [](1.6724,1.0968)--(-0.1136,-1.9968);

	\node[circle, draw, red, fill=red, inner sep=0pt, minimum size=1pt] at (-2,0) {};
	\draw[red,thick,densely dotted] (-2,0) circle (2);
	\node[circle, draw, red, fill=red, inner sep=0pt, minimum size=1pt] at (1.732,-1) {};
	\draw[red,thick,densely dotted] (1.732,-1) circle (2);
	
	\draw [dash dot,blue,thick](1.732,1)--(0,-2);
	\draw [dash dot,blue,thick](-1,1.732)--(-1,-1.732);
	
	\draw [dashed,green,thick](-1.8,0)--(0,0)--(1.5588,-0.9);
	
	\node at ($(-1.8,0)+(-45:.3 and .6)$) {$C_{n-1}$};
	\node at ($(0,0)+(-45:.3 and .6)$) {$C_n$};
	\node at ($(1.5588,-0.9)+(60:.3 and .3)$) {$C_{n+1}$};

	\end{tikzpicture}\]
	\caption{Common chord of $\Dc_{n-1}$ and $\Dc_{n}$, common chord of $\Dc_{n}$ and $\Dc_{n+1}$ (solid), and the chords of $\Dc_{n}$ of length $\sqrt{3}$ (dash-dotted) parallel to the common chords}
	\label{fig-bdd}
\end{figure}

It follows that $\Ply_n$ is bounded between the aforementioned pair of chords of length $\sqrt{3}$ each. We also note that as $\angle C_{n-1}C_nC_{n+1} \ge \frac{2\pi}{3}$, these two of chords of length $\sqrt{3}$ don't cross each other (they can at most meet at a point on the boundary of $\Dc_n$). Hence, the $\nu_n$-gon $\Ply_n$ can be fit inside the convex, compact set $\M'_\theta$, where $\theta = \angle C_{n-1}C_nC_{n+1}$. This is because the aforementioned pair of chords of length $\sqrt{3}$ each of $\Dc_n$ along with a pair of suitable (minor) arcs describe the boundary of $\M'_\theta$ (up to congruence).

Thus, for all $n \in \{2, \ldots, N-1\}$, the area of the $\nu_n$-gon $\Ply_n$ is less than or equal to the area of a largest $\nu_n$-gon inscribed in  $\M'_\theta$. 
Hence, from Lemma~\ref{M-bigger-poly}, it follows that $a(\Ply_n) \le a(\Ply^*_{\nu_n})$.
\end{proof}

Let $\Pi$ be any convex hexagon in $\mathbb {R}^2$ and $(\Dc_1, \ldots, \Dc_N)$ be a finite sequence covering of $\Pi$ by closed unit circular disks. Suppose that for $n \in \{1, \ldots, N\}$, $\Ply_n$ is the Voronoi cell (a convex polygon) corresponding to the disk $\Dc_n$ (in reference to Lemma~\ref{lemma-poly}). We have the following bound on the average number of (proper) vertices of these polygons.
\begin{LEMMA}\label{lemma-best6}
	If, for all $n \in \{1, \ldots, N\}$, the Voronoi cell $\Ply_n$ has $\nu_n$ proper vertices, then 
	$\sum_{n = 1}^{N} \nu_n \le 6N$.
\end{LEMMA}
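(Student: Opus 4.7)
The plan is to view the Voronoi subdivision of $\Pi$ inside the hexagonal boundary as a planar graph and apply Euler's formula, while counting $\sum_n \nu_n$ as a sum of incidences between vertices and cells.

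First I would set up the planar graph $G$ whose faces are the Voronoi cells $\Ply_1,\ldots,\Ply_N$ together with the unbounded outer face, whose edges are the Voronoi edges inside $\Pi$ and the boundary segments of $\Pi$ between consecutive vertices, and whose vertices are of three types: the six corners of $\Pi$, the points where a Voronoi edge meets the boundary of $\Pi$ (call the number of these $V_b$), and the Voronoi vertices strictly inside $\Pi$ (call the number of these $V_i$). In the generic situation each interior Voronoi vertex lies on the boundary of exactly three cells and hence has degree $3$ in $G$; each non-corner boundary vertex has degree $3$ (two boundary pieces of $\partial \Pi$ plus one incoming Voronoi edge) and lies on the boundary of exactly two cells; and each of the six corners has degree $2$ and lies in exactly one cell.

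Next I would read off the key identities. Each proper vertex of $\Ply_n$ is a vertex of $G$ incident to cell $\Ply_n$, so $\sum_{n=1}^N \nu_n$ equals the number of incidences between cells and vertices. By the above classification,
\begin{equation*}
\sum_{n=1}^N \nu_n \;=\; 3V_i + 2V_b + 6,
\end{equation*}
while the handshake lemma gives $2E = 3V_i + 3V_b + 12$. Plugging $V = V_i + V_b + 6$ and $F = N+1$ into Euler's formula $V - E + F = 2$ yields, after a short calculation,
\begin{equation*}
V_i + V_b \;=\; 2N - 2.
\end{equation*}
Substituting back, $\sum_n \nu_n = 3(V_i + V_b) - V_b + 6 = 6N - V_b \le 6N$, which is the desired bound (and shows that $6N$ is attained precisely when no Voronoi edge meets the interior of a side of $\Pi$).

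The only real subtlety is handling non-generic degeneracies: a point where $k \ge 4$ Voronoi cells meet, or a Voronoi vertex that happens to coincide with a corner of $\Pi$, or a boundary point where several Voronoi edges meet. For these cases I would observe that coalescing degree-$3$ Voronoi vertices strictly decreases $\sum_n \nu_n$ (two generic vertices shared by cell triples $\{a,b,c\}$ and $\{b,c,d\}$ contribute $6$, but the merged vertex shared by $\{a,b,c,d\}$ contributes only $4$), so the generic bound is in fact the worst case; alternatively one can perturb the generators $C_1,\ldots,C_N$ slightly to reach a generic configuration, apply the argument above, and then let the perturbation tend to zero, using the fact that degeneration cannot increase $\sum_n \nu_n$. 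This degeneracy bookkeeping is the only place where any care is needed; the Euler-characteristic core of the argument is routine.
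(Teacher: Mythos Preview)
Your argument is correct and is essentially the same Euler-formula count on the Voronoi subdivision of $\Pi$ that the paper carries out, following Bambah--Rogers. The one difference is in how degeneracies are handled: instead of your perturbation/monotonicity step, the paper passes to \emph{augmented} polygons $\Ply'_0,\ldots,\Ply'_N$ (adding as vertices all points that are proper vertices of some cell or of $\Pi$), uses only the inequality $\alpha_i\ge 3$ at every vertex except the six corners of $\Pi$, and counts face--edge rather than face--vertex incidences, which makes the argument go through uniformly without a genericity assumption.
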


\begin{proof}
	We provide a proof of the lemma following the proof of a similar result by Bambah and Rogers in
	their proof of L. Fejes T\'{o}th's Theorem in \cite{BR}.
	First we form augmented polygons $\Ply'_0, \Ply'_1, \ldots , \Ply'_N$ from $\Pi, \Ply_1, \ldots , \Ply_N$ respectively by regarding a point $x$ on the boundary of one of $\Pi, \Ply_1, \ldots , \Ply_N$ as a vertex of the corresponding polygon $\Ply'_0, \Ply'_1, \ldots , \Ply'_N$ if it is a proper vertex of at least one of the polygons $\Pi, \Ply_1, \ldots , \Ply_N$. Suppose that for all $i \in \{0, \ldots , N\}$, $\Ply'_i$ has $\nu'_i$ sides. Let $n_0, n_1$, and $n_2$ be the number of vertices, sides and regions respectively comprising the system of polygons $\Ply'_1, \ldots , \Ply'_N$. It follows that $n_2 = N$. Let $\alpha_1, \ldots, \alpha_{n_0}$ be the number of sides meeting at the different vertices of the configuration. We observe that for all $i \in \{1, \ldots, n_0\}, \alpha_i$ is at least 3, except perhaps for those vertices which are proper vertices of $\Pi$ (for such a vertex, $\alpha_i$ is at least 2). Since $\Pi$ is a hexagon, we have 
	$$\sum_{i = 1}^{n_0} \alpha_i \ge 3n_0 - 6.$$
	But counting the incident pairs of a vertex and a side in two ways we have
	$$\sum_{i = 1}^{n_0} \alpha_i  = 2n_1.$$
	By Descartes-Euler polyhedral formula \cite{weisensteinPolyhedralFormula}, viz.  $V - E + F =2$, we have $n_0 - n_1 + (n_2 + 1) = 2$ (we note that number of  faces (regions), $F$ is $n_2 + 1 = N + 1$ here, as the complement of $\Pi$ in $\mathbb R^2$ contributes 1), which implies $3n_0 - 6 = 3n_1 - 3N -3$. Thus we get $2n_1 \ge 3n_1 - 3N -3$, i.e. $n_1 \le 3N + 3$. Now counting the incident pairs of a polygon and a side in two ways we have 
	$$\sum_{i = 0}^{N} \nu'_i = 2n_1.$$
	Hence,
	$$\sum_{i = 1}^{N} \nu_i \le \sum_{i = 0}^{N} \nu'_i - 6 = 2n_1 -6 \le 6N.$$
\end{proof}

We now use the following theorem by Dowker to conclude that if, for all $n \ge 3$, $a^*(n)$ denotes the area of the largest $n$-gon inscribed in $\M$, then $a^*(n)$ is bigger than the average of $a^*(n-1)$ and $a^*(n+1)$.
\begin{THEOREM}\cite{dowker1944}\label{theorem-dowker}
	Given a convex, compact set $\C$ in $\mathbb{R}^2$ with a non-empty interior, if, for $n \ge 3$, $\Q_n$ denotes an $n$-gon of the largest area inscribed in $\C$, then 
	$$a(\Q_n) \ge \frac{a(\Q_{n-1})+a(\Q_{n+1})}{2},$$
	for every $n \ge 4$.
\end{THEOREM}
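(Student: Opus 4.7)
The plan is to prove the discrete concavity $2a(\Q_n) \ge a(\Q_{n-1}) + a(\Q_{n+1})$ by constructing, from the optimal inscribed polygons $\Q_{n-1}$ and $\Q_{n+1}$, a pair of inscribed $n$-gons whose areas sum to at least $a(\Q_{n-1}) + a(\Q_{n+1})$. Since each of these $n$-gons has area bounded above by $a(\Q_n)$, the desired inequality then follows by averaging.

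First I would record the standard reduction that the vertices of any area-maximizing inscribed $m$-gon may be taken to lie on $\partial \C$, since any vertex strictly interior to $\C$ could be pushed outward along a suitable direction to strictly increase the polygon's area. Writing the vertices of $\Q_{n-1}$ as $A_1,\ldots,A_{n-1}$ and of $\Q_{n+1}$ as $B_1,\ldots,B_{n+1}$ in cyclic order along $\partial \C$, I would also record the local-maximality characterization inherited from optimality: at each vertex the triangle formed with its two cyclic neighbours attains maximum area among all placements of the middle vertex on the corresponding arc of $\partial \C$. This is the only place where the genuine optimality of $\Q_{n-1}$ and $\Q_{n+1}$ (as opposed to mere inscription) enters.

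The core construction pairs an \emph{insertion} into $\Q_{n-1}$ with a \emph{deletion} from $\Q_{n+1}$. For each $i$, inserting the optimal apex on the arc of $\partial \C$ from $A_i$ to $A_{i+1}$ (on the side exterior to $\Q_{n-1}$) yields an inscribed $n$-gon of area $a(\Q_{n-1}) + g_i$, where $g_i$ is the area of the largest such triangle; for each $j$, deleting $B_j$ yields an inscribed $n$-gon of area $a(\Q_{n+1}) - \ell_j$, where $\ell_j = a(\triangle B_{j-1} B_j B_{j+1})$. The heart of the proof, and the main technical obstacle, is a combinatorial double count comparing $\sum_i g_i$ with $\sum_j \ell_j$ through the merged cyclic arrangement of all $2n$ vertices along $\partial \C$. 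Exploiting the local-maximality conditions together with the interleaving of the two vertex sets on $\partial \C$, a telescoping identity across consecutive arcs forces the existence of some $i$ with $g_i \ge \tfrac{1}{2}\bigl(a(\Q_{n+1}) - a(\Q_{n-1})\bigr)$ or some $j$ with $\ell_j \le \tfrac{1}{2}\bigl(a(\Q_{n+1}) - a(\Q_{n-1})\bigr)$; either alternative immediately produces an inscribed $n$-gon of area at least $\tfrac{1}{2}\bigl(a(\Q_{n-1}) + a(\Q_{n+1})\bigr)$, which is bounded above by $a(\Q_n)$. The delicacy lies in making the telescoping robust to all possible combinatorial interleavings of $\{A_i\}$ and $\{B_j\}$ on $\partial \C$, which is the content of Dowker's original argument.
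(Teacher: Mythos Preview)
The paper does not prove this theorem; it is quoted from Dowker~\cite{dowker1944} and invoked as a black box to obtain Corollary~\ref{concave-fn}, so there is nothing in the paper to compare your argument against.

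Your outline is on the right track and is in the spirit of Dowker's own strategy, but it is a plan rather than a proof. The assertion that ``a telescoping identity across consecutive arcs forces the existence of some $i$ with $g_i \ge \tfrac{1}{2}\bigl(a(\Q_{n+1}) - a(\Q_{n-1})\bigr)$ or some $j$ with $\ell_j \le \tfrac{1}{2}\bigl(a(\Q_{n+1}) - a(\Q_{n-1})\bigr)$'' is the entire substance of the theorem, and you neither state the identity nor derive the conclusion from it; you yourself defer this step to ``Dowker's original argument.'' Until that step is actually carried out, nothing has been established. One further remark on the plan itself: the local-maximality conditions you invoke for $\Q_{n\pm1}$ are not what drives Dowker's argument, and the optimality of $\Q_{n-1}$ and $\Q_{n+1}$ in fact plays no role beyond the choice of those particular polygons. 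The standard proof merges the $2n$ vertices on $\partial\C$, uses a pigeonhole observation on the merged cyclic sequence (there are two more $B$-vertices than $A$-vertices, which forces a useful adjacency), and redistributes the $2n$ points into two sets of size $n$ whose convex hulls have total area at least $a(\Q_{n-1})+a(\Q_{n+1})$ via a convexity inequality; only the optimality of $\Q_n$ is then used, to bound each constructed $n$-gon from above. If you rework the argument, aim directly for $\max_i g_i \ge \min_j \ell_j$ (from which your dichotomy is immediate) rather than trying to control all interleavings separately.
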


\begin{COROLLARY}\label{concave-fn}
	If, for all $n \ge 3$, $a^*(n)$ denotes the area of the largest $n$-gon inscribed in $\M$, then for every $n \ge 4$, 
	$$a^*(n) \ge \frac{a^*(n-1)+a^*(n+1)}{2}.$$
\end{COROLLARY}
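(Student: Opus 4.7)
The corollary is an immediate application of Theorem~\ref{theorem-dowker} (Dowker's theorem), so my plan is essentially to verify that $\M$ fits the hypothesis of that theorem and then invoke it. Dowker's theorem applies to any convex, compact set $\C \subset \mathbb{R}^2$ with non-empty interior, and yields precisely the concavity inequality claimed for the sequence $a(\Q_n)$ of largest inscribed $n$-gon areas.

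The plan is therefore: first, recall from the construction preceding the corollary that $\M$ is defined as the subset of the closed unit disk bounded by two chords of length $\sqrt{3}$ sharing a common endpoint together with the minor arc between their other endpoints. I would briefly note that $\M$ is (i) convex, since it is the intersection of the closed unit disk with two closed half-planes (each determined by one of the two bounding chords); (ii) compact, as a closed and bounded subset of $\mathbb{R}^2$; and (iii) possesses non-empty interior, since, for example, the midpoint of the common endpoint and the midpoint of the bounding arc is an interior point. Thus $\M$ satisfies the hypothesis of Theorem~\ref{theorem-dowker}.

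Second, I would apply Theorem~\ref{theorem-dowker} directly with $\C = \M$. By definition $a^*(n) = a(\Q_n)$ where $\Q_n$ is a largest $n$-gon inscribed in $\M$, so the theorem immediately gives
\[
a^*(n) \ge \frac{a^*(n-1) + a^*(n+1)}{2} \qquad \text{for every } n \ge 4,
\]
which is exactly the claim.

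There is no real obstacle here, since this is a straightforward specialization of a cited result; the only content is the sanity check that $\M$ indeed qualifies as the kind of set to which Dowker's theorem applies. The proof is therefore expected to be essentially one line once the verification of the hypothesis is noted.
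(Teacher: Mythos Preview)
Your proposal is correct and matches the paper's approach exactly: the paper's proof is the single line ``Follows from Theorem~\ref{theorem-dowker} as $\M$ is a convex, compact set in $\mathbb{R}^2$ with a non-empty interior.'' Your additional verification that $\M$ is an intersection of the closed disk with two half-planes (hence convex), closed and bounded (hence compact), and has non-empty interior is a helpful sanity check but not strictly needed beyond what the paper states.
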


\begin{proof}
	Follows from Theorem~\ref{theorem-dowker} as $\M$ is a convex, compact set in $\mathbb{R}^2$ with a non-empty interior.
\end{proof}

\begin{REMARK}\normalfont
	From Observation~\ref{rem-0} it follows that if, for all $n \ge 3$, $a^*(n)$ denotes the area of the largest $n$-gon inscribed in $\M$, then $$a^*(n) = \frac{\sqrt{3}}{2} + \frac{n-2}{2} \sin \left(\frac{2\pi}{3(n-2)}\right).$$
\end{REMARK}

We now extend the function $a^*$ to all real numbers bigger than or equal to 3 as the linear interpolation on the set of data points $\{(n, a^*(n)) : n \in \mathbb{N}, n \ge 3\}$. From Corollary~\ref{concave-fn} it follows that $a^*$ is a concave function. We have the following result for a real concave function.

\begin{THEOREM}[Jensen's inequality for concave functions, 1906]\label{theorem-jensen}
	If $f$ is a real concave function and $x_1, \ldots , x_n$ are real numbers in its domain, then 
	$$\frac{\sum_{i=1}^n f(x_i)}{n} \le f \left(\frac{ \sum_{i=1}^n x_i}{n} \right).$$
	Equality holds if and only if $x_1 = \cdots = x_n$ or $f$ is linear.
\end{THEOREM}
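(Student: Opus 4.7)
The plan is to prove Jensen's inequality by induction on $n$, tracking the equality condition alongside. The case $n=1$ is vacuous, and the case $n=2$ is the defining inequality $f\bigl(\tfrac{x_1+x_2}{2}\bigr) \ge \tfrac{f(x_1)+f(x_2)}{2}$ of (midpoint) concavity, with equality iff $x_1=x_2$ or $f$ is affine on the segment joining $x_1$ and $x_2$. Since the function $a^*$ of interest in the paper is the piecewise-linear interpolation (hence continuous), midpoint concavity as established in Corollary~\ref{concave-fn} upgrades to the full weighted concavity inequality $f(\lambda u + (1-\lambda)v) \ge \lambda f(u) + (1-\lambda) f(v)$ for every $\lambda \in [0,1]$; this is the only weighted two-point form I will need below.

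For the inductive step, assume the inequality holds for every collection of $n-1$ points in the domain. Given $x_1,\ldots,x_n$, set $y = \tfrac{1}{n-1}\sum_{i=1}^{n-1} x_i$ and note the identity
$$\frac{1}{n}\sum_{i=1}^{n} x_i \;=\; \frac{n-1}{n}\,y + \frac{1}{n}\,x_n.$$
The weighted two-point concavity inequality (with weights $\tfrac{n-1}{n}$ and $\tfrac{1}{n}$) yields
$$f\!\left(\tfrac{n-1}{n}\,y + \tfrac{1}{n}\,x_n\right) \;\ge\; \tfrac{n-1}{n}\,f(y) + \tfrac{1}{n}\,f(x_n),$$
and applying the inductive hypothesis to $x_1,\ldots,x_{n-1}$ gives $f(y) \ge \tfrac{1}{n-1}\sum_{i=1}^{n-1} f(x_i)$. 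Substituting completes the step, since
$$f\!\left(\tfrac{1}{n}\sum_{i=1}^n x_i\right) \;\ge\; \tfrac{n-1}{n}\cdot\tfrac{1}{n-1}\sum_{i=1}^{n-1}f(x_i) + \tfrac{1}{n}\,f(x_n) \;=\; \tfrac{1}{n}\sum_{i=1}^n f(x_i).$$

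For the equality characterization, I would track equality through both ingredients used in the inductive step. The two-point step is an equality iff $y = x_n$ or $f$ is affine on the segment spanned by $y$ and $x_n$, and the inductive equality requires either $x_1=\cdots=x_{n-1}$ or $f$ is affine on $[\min_{i<n} x_i,\ \max_{i<n} x_i]$. Because both inequalities point in the same direction, strictness in either would propagate to strictness in the final bound, so both must be equalities simultaneously. A short case analysis combining these two conditions shows that equality in the $n$-point inequality forces $x_1=\cdots=x_n$ or $f$ to be affine on $[\min_i x_i,\ \max_i x_i]$, and conversely both of these configurations trivially attain equality. The main obstacle here is precisely this equality bookkeeping: the inequality itself is a clean telescoping of two-point concavity, but verifying that no ``mixed'' configuration (strict in one step, equality in the other) can produce overall equality requires the observation above about inequalities pointing the same way.
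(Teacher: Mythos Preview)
The paper does not prove this theorem at all: it is stated as the classical 1906 result of Jensen and used as a black box in the proof of Theorem~\ref{prop-1}. There is therefore nothing to compare your argument against on the paper's side.

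Your inductive argument is the standard proof and is correct for the inequality. One small point worth making explicit is that the auxiliary point $y=\tfrac{1}{n-1}\sum_{i<n}x_i$ lies in the domain; this requires the domain to be an interval, which is implicit in the hypothesis that $f$ is concave (and certainly holds for the piecewise-linear $a^*$ the paper cares about). Your handling of the equality case is sketched rather than carried out: the ``short case analysis'' in Case~2b (where $f$ is affine on $[\min_{i<n}x_i,\max_{i<n}x_i]$ and also on the segment between $y$ and $x_n$) needs the observation that, since not all $x_i$ with $i<n$ coincide in this branch, $y$ lies strictly between $\min_{i<n}x_i$ and $\max_{i<n}x_i$, so the two intervals overlap in more than one point and the affine pieces must agree. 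With that remark the equality characterization goes through as you describe, and in fact your conclusion ``affine on $[\min_i x_i,\max_i x_i]$'' is sharper than the paper's informal ``$f$ is linear''.
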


Now we proceed to prove the lower bound mentioned in Theorem~\ref{prop-1}.
\begin{THEOREM}
	If $\gamma^*(\Dc)$ is the optimal lower density of covering the plane by a sequence of unit circular disks $(\Dc_1, \ldots, \Dc_n, \ldots)$ (with $C_n$ being the center of the disk $\Dc_n$ for all $n \in \mathbb N$) such that for $n \ge 2$, $C_n$ lies in $\Dc_{n-1}$ and $\angle C_{n-1}C_nC_{n+1} \ge \frac{2\pi}{3}$, then 
	$$\gamma^*(\Dc) \ge \frac{2\pi}{2 + \sqrt{3}}.$$
\end{THEOREM}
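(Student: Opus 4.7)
The plan is to run the Voronoi machinery of Lemma~\ref{lemma-poly}, combine the polygon-area bound of Lemma~\ref{lemma-bigger-poly} with the vertex-count estimate of Lemma~\ref{lemma-best6}, and apply Jensen's inequality (Theorem~\ref{theorem-jensen}) to the concave function $a^*$. Since $\gamma(\lambda,\mathscr{F})$ is insensitive to the choice of bounded convex reference region containing the origin, I fix a regular hexagon $\Pi_0$ centered at the origin and replace $\lambda\mathcal{I}$ by $\Pi_\lambda := \lambda\Pi_0$; it then suffices to show
\[
\liminf_{\lambda\to\infty} \frac{N^\Pi_\lambda \cdot \pi}{a(\Pi_\lambda)} \;\ge\; \frac{2\pi}{2+\sqrt{3}},
\]
where $N^\Pi_\lambda$ counts the disks of the sequence meeting $\INT(\Pi_\lambda)$.

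For each large $\lambda$, choose $M = M(\lambda)$ so that $\Dc_1\cup\cdots\cup\Dc_M \supset \Pi_\lambda$ (possible since the sequence covers $\mathbb{R}^2$), and apply Lemma~\ref{lemma-poly} with bounding polygon $\Pi_\lambda$, producing Voronoi cells $\Ply_1,\ldots,\Ply_M$ tiling $\Pi_\lambda$ with $\Ply_n \subset \Dc_n$. Let $K$ denote the number of non-empty cells, and for each such cell let $\nu_n \ge 3$ be its number of proper vertices. Every non-empty cell has interior in $\INT(\Pi_\lambda)$, so $K \le N^\Pi_\lambda$. For a non-empty cell with $2 \le n \le M-1$ whose closure is disjoint from $\partial\Pi_\lambda$, Lemma~\ref{lemma-bigger-poly} gives $a(\Ply_n) \le a^*(\nu_n)$, where $a^*(\nu)$ is the area of the largest $\nu$-gon inscribed in $\M$. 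The remaining non-empty cells (those meeting $\partial\Pi_\lambda$, along with $\Ply_1$ and $\Ply_M$) form a boundary layer of total area $O(\lambda)$: only $O(\lambda)$ unit disks can meet the convex curve $\partial\Pi_\lambda$, and each such cell has area at most $\pi$. Lemma~\ref{lemma-best6} yields $\sum_n\nu_n \le 6K$; since $a^*$ is non-decreasing and its piecewise-linear interpolation to $[3,\infty)$ is concave by Corollary~\ref{concave-fn}, Jensen's inequality gives
\[
\sum_{n:\,\Ply_n \ne \emptyset} a^*(\nu_n) \;\le\; K \cdot a^*\!\left(\tfrac{1}{K}\sum_n \nu_n\right) \;\le\; K \cdot a^*(6).
\]

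Summing the area identity $a(\Pi_\lambda) = \sum_n a(\Ply_n)$ and combining the two estimates gives $a(\Pi_\lambda) \le K\cdot a^*(6) + O(\lambda)$. The explicit formula $a^*(n) = \tfrac{\sqrt{3}}{2} + \tfrac{n-2}{2}\sin\tfrac{2\pi}{3(n-2)}$ specialises at $n=6$ to $a^*(6) = \tfrac{\sqrt{3}}{2} + 2\sin(\pi/6) = \tfrac{2+\sqrt{3}}{2}$, so
\[
\gamma(\lambda,\mathscr{F}) \;=\; \frac{N^\Pi_\lambda\cdot\pi}{a(\Pi_\lambda)} \;\ge\; \frac{K\,\pi}{a(\Pi_\lambda)} \;\ge\; \frac{\pi}{a^*(6)} - o(1) \;=\; \frac{2\pi}{2+\sqrt{3}} - o(1),
\]
and passing to $\liminf$ as $\lambda\to\infty$ yields the claimed bound.

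The main obstacle I expect is the clean treatment of the boundary layer. Lemma~\ref{lemma-bigger-poly} requires the two-sided chord constraint from $\Dc_{n-1}$ and $\Dc_{n+1}$ and therefore does not cover cells shaped partly by $\partial\Pi_\lambda$ nor the two endpoint cells $\Ply_1,\Ply_M$; one must check that such cells number only $O(\lambda)$ and contribute $o(a(\Pi_\lambda)) = o(\lambda^2)$ to the total area. A secondary point is verifying that every non-empty Voronoi cell is a genuine polygon with $\nu_n \ge 3$ proper vertices, so that the linear extension of $a^*$ is evaluated within its domain of definition and the Jensen step is valid.
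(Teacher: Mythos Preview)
Your argument is correct and follows the same route as the paper: Voronoi decomposition (Lemma~\ref{lemma-poly}), the cell-area bound (Lemma~\ref{lemma-bigger-poly}), the average-vertex estimate (Lemma~\ref{lemma-best6}), and Jensen applied to the concave interpolant $a^*$.

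The one place where you diverge is in the error-term bookkeeping, and here you are making life harder than necessary. Your ``main obstacle''---that Lemma~\ref{lemma-bigger-poly} fails for cells touching $\partial\Pi_\lambda$---is not actually an obstacle. The lemma applies to \emph{every} cell $\Ply_n$ with $2\le n\le M-1$, boundary-touching or not: the Voronoi cell of $C_n$ always lies on the $C_n$-side of the perpendicular bisector of $C_{n-1}C_n$ and on the $C_n$-side of the perpendicular bisector of $C_nC_{n+1}$, so $\Ply_n$ sits between the two common chords and hence inside a copy of $\M'_\theta$ regardless of whether some of its edges come from $\partial\Pi_\lambda$. Consequently the paper gets away with exactly two exceptional cells, $\Ply_1$ and $\Ply_M$, each of area at most $\pi$; one writes $a(\Ply_1)\le a^*(\nu_1)+c$ and $a(\Ply_M)\le a^*(\nu_M)+c$ for a fixed constant $c$ (e.g.\ $c=\pi$) independent of $\lambda$, and the $2c/N$ correction vanishes in the limit. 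Your $O(\lambda)$ boundary-layer estimate works too, but is superfluous. Your secondary concern about $\nu_n\ge 3$ is harmless: any Voronoi cell with non-empty interior is a genuine convex polygon.
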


\begin{proof}
	Let $\Pi$ be a convex hexagon containing the center of $\Dc_1$. Since $\Pi$ is a compact set, it follows that there is a least $N \in \mathbb{N}$ such that $\{\Dc_1, \ldots , \Dc_N\}$ covers $\Pi$. If multiple disks from the finite sequence $\Dc_1, \ldots , \Dc_N$ coincide, we throw away all but one. Let $\Ply_1, \ldots, \Ply_N$ be the polygons (Voronoi cells) corresponding to the disks $\Dc_1, \ldots , \Dc_N$, respectively as constructed in the proof of Lemma~\ref{lemma-poly} (with $\Ply_n=\emptyset$ for a ``thrown away" disk $\Dc_n$). Also suppose that for $n \in \{1,\ldots,N\}, \Ply_n$ is a proper $\nu_n$-gon. From Lemma~\ref{lemma-poly}, we have $a(\Pi) = \sum_{n = 1}^N a(\Ply_n)$.
	
	From Lemma~\ref{lemma-bigger-poly} 	we have, for $n \in \{2,\dots,N-1\}$, $a(\Ply_n) \le a^*(\nu_n)$. Also we can choose a positive number $c$ such that $a(\Ply_1) \le a^*(\nu_1) + c$ and $a(\Ply_N) \le a^*(\nu_N) + c$. Thus  
	
	\begin{align*}
	\frac{\sum_{n = 1}^N a(\Ply_n)}{N} &\le \frac{\sum_{n = 1}^N a^*(\nu_n) + 2c}{N}\\
			&\le \frac{\sum_{n = 1}^N a^*(\nu_n)}{N} + \frac{2c}{N}\\
			&\le a^*\left(\frac{\sum_{n = 1}^N \nu_n}{N} \right) + \frac{2c}{N}	 ~  \mbox{ (by Jensen's inequality (Theorem~\ref{theorem-jensen}))}.
	\end{align*}
	As $a^*$ is an increasing function, from Lemma~\ref{lemma-best6} it follows that 
	$$\frac{\sum_{n = 1}^N a(\Ply_n)}{N} \le a^*(6) + \frac{2c}{N}.$$ 
	Thus we have 
	$$\frac{\sum_{n=1}^N a(\Dc_n)}{a(\Pi)} = \frac{N\pi}{\sum_{n = 1}^N a(\Ply_n)} \ge \frac{\pi}{a^*(6) + \frac{2c}{N}}.$$
	
We note that if we replace $\Pi$ by $\lambda\Pi$ ($\lambda \ge 1$) in the inequality above, the integer $N$ depends on $\lambda$ (i.e. $N = N(\lambda)$) and $N(\lambda) \rightarrow \infty$ as $\lambda \rightarrow \infty$, but the positive number $c$ can be chosen in such a way that it is independent of $\lambda$ (for example, we may set $c = \pi$). Thus  
	$$\frac{\sum_{n=1}^{N(\lambda)} a(\Dc_n)}{a(\lambda\Pi)} \ge \frac{\pi}{a^*(6) + \frac{2c}{N(\lambda)}}$$
	and hence 
	$$\liminf_{\lambda \to \infty} \frac{\sum_{n=1}^{N(\lambda)} a(\Dc_n)}{a(\lambda\Pi)} \ge \frac{\pi}{a^*(6)} = \frac{\pi}{1 + \frac{\sqrt 3}{2}}.$$
	Therefore, 
	$$\gamma^*(\Dc) \ge \frac{2\pi}{2 + \sqrt{3}}.$$
\end{proof}

Now we produce a sequence covering of the plane by closed unit circular disks that attains this bound. Suppose that for all $j \in \mathbb{N}$, $\Pi_j$ denotes the regular convex dodecagon (12-gon) whose vertices (in polar coordinates, taking the origin as the pole and the $x$-axis as the polar axis) are the elements of the set 
$$\left\{\left(\frac{(1+\sqrt{3})\cdot j}{\sqrt{2}},\frac{\pi \cdot i}{6}\right) : i \in \{0,\ldots,11\}\right\}.$$

\begin{REMARK}\normalfont
	For all $j \in \mathbb{N}$, the length of each side of the convex regular dodecagon $\Pi_j$ as defined above is $j$ and hence $a(\Pi_j) = 3\left(2+\sqrt{3}\right)j^2$.
\end{REMARK}
Now for all $j \in \mathbb{N}$ we place unit circular disks with centers at each vertex of $\Pi_j$ and at any point on a side of $\Pi_j$ such that the point is an integral distance away from the end-vertices of the side. We note that in this way for all $j \in \mathbb{N}$, we place $12j$ unit circular disks along the boundary of $\Pi_j$. For all $j \in \mathbb{N}$, we call the collection of these $12j$ unit disks 
the ``\emph{$j$-th layer" of disks}. We also call the unit disk centered at the origin the ``\emph{$0$-th layer" of disks} (see Figure~\ref{fig-layers}).

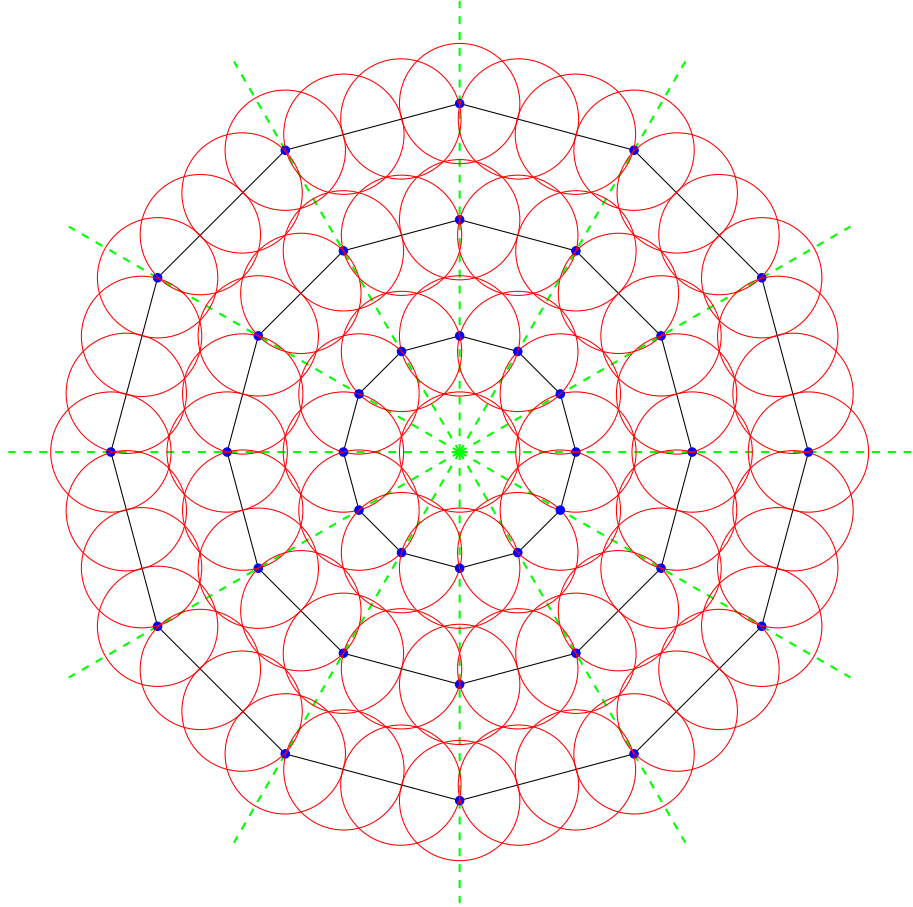
\begin{figure}[!ht]
	\centering
	\begin{tikzpicture}
	\begin{scope}[scale=0.8, transform shape]
	
	\foreach \a in {0, 30, ..., 350}
	\draw [green,dashed,thick] (\a:7.5) (\a:0) -- (\a+30:7.5);
	
	\foreach \a in {0, 30, ..., 350}
	{
		\draw [] (\a:1.932) (\a:1.932) -- (\a+30:1.932);
		\draw [] (\a:3.864) (\a:3.864) -- (\a+30:3.864);
		\draw [] (\a:5.796) (\a:5.796) -- (\a+30:5.796);
	}

	\draw[red] (0,0) circle (1);
	
	\foreach \a in {0, 30, ..., 330 }
	{\draw[red] (\a:1.932) circle (1);
		\draw[red] (\a:3.864) circle (1);
		\draw[red] (\a:5.796) circle (1);
		\node[circle, draw, blue, fill=blue, inner sep=0pt, minimum size=4pt] at (\a:1.932) {};	
		\node[circle, draw, blue, fill=blue, inner sep=0pt, minimum size=4pt] at (\a:3.864) {};	
		\node[circle, draw, blue, fill=blue, inner sep=0pt, minimum size=4pt] at (\a:5.796) {};	
	}
	
	\foreach \a in {15,45, ..., 345 }
	{\draw[red] (\a:3.732) circle (1);}
	
	\foreach \a in {10,40, ..., 350}
	{\draw[red] (\a:5.625) circle (1);}
	\foreach \a in {20,50, ..., 350}
	{\draw[red] (\a:5.625) circle (1);}

	\end{scope}
	\end{tikzpicture}
	
	\caption{Dodecagons $\Pi_1$, $\Pi_2$, and $\Pi_3$ (smaller to bigger) and $0$-th, first, second, and third layer of disks}
	\label{fig-layers}
\end{figure}

We may verify that the collection of all the disks belonging to all the $j$-th layer of disks (including the $0$-th layer of disks) covers $\mathbb {R}^2$. More importantly, for each $j$, the $12j$ disks belonging in the $j$-th layer of disks may be ordered in such a way that from the second disk onward, each disk in the layer has its center on the boundary of the previous disk. We can do that by walking in the clockwise (or anti-clockwise) direction along the boundary of $\Pi_j$ starting with the center of any arbitrary disk in the $j$-th layer and picking up the next disks in order as we approach their centers.

We can also jump from the ``last" disk in the $j$-th layer to the ``first" disk in the $(j+1)$-th layer with only finitely many disks in-between which respect both the ``disk having its center inside the previous disk" and ``no sharp turn beyond $\frac{2\pi}{3}$" criteria. As demonstrated in the Figure~\ref{fig-jump}, three circular disks (all lie 
within the $(j-1)$-th and $j$-th layers) are enough to jump from the last disk of the $(j-1)$-th layer to the first disk of the $j$-th layer for a large enough $j$ (say $j \ge 5$) and also for a smaller $j$, we can jump to the next layer of disks using finitely many extra disks. Thus there is a $c \in \mathbb N$, $c$ being independent of $j$, such that we can jump from the $(j-1)$-th layer to the $j$-th layer using at most $c$ extra disks for all $j \in \mathbb N$.

\begin{figure}[!ht]
	\centering
	\[\begin{tikzpicture}
	\begin{scope}[scale=0.9, transform shape]
	\draw[red, densely dotted, thick] (1,0) circle (1);
	\draw[red, densely dotted, thick] (2,0) circle (1);
	\draw[red, densely dotted, thick] (3,0) circle (1);
	\draw[red, densely dotted, thick] (4,0) circle (1);
	\draw[red, densely dotted, thick] (5,0) circle (1);
	\draw[red, densely dotted, thick] (6,0) circle (1);
	
	\draw[red, densely dotted, thick] (0.5,1.866) circle (1);
	\draw[red, densely dotted, thick] (1.5,1.866) circle (1);
	\draw[red, densely dotted, thick] (2.5,1.866) circle (1);
	\draw[red, densely dotted, thick] (3.5,1.866) circle (1);
	\draw[red, densely dotted, thick] (4.5,1.866) circle (1);
	\draw[red, densely dotted, thick] (5.5,1.866) circle (1);
	\draw[red, densely dotted, thick] (6.5,1.866) circle (1);
	
	\draw[thick] (2.75,0.933) circle (1);
	\draw[thick] (2.375,0.4665) circle (1);
	\draw[thick] (3.125,1.3995) circle (1);

	\draw [dashed,green,thick](2,0)--(3.5,1.866);
	
	\draw [->,green](1,0)--(1.5,0);
	\draw [->,green](3,0)--(3.5,0);
	\draw [->,green](4,0)--(4.5,0);
	\draw [->,green](5,0)--(5.5,0);
	\draw [dotted,->,green,thick](-1,0)--(0.5,0);
	\draw [dotted,->,green,thick](6,0)--(6.5,0);
	
	\draw [green](1.5,0)--(2,0);
	\draw [green](3.5,0)--(4,0);
	\draw [green](4.5,0)--(5,0);
	\draw [green](5.5,0)--(6,0);
	\draw [dotted,green,thick](0.5,0)--(1,0);
	\draw [dotted,green,thick](6.5,0)--(7.5,0);
	\draw [densely dotted,green,thick](1.5,-1.866)--(3,0);
	
	\draw [->,green](3.5,1.866)--(4,1.866);
	\draw [->,green](4.5,1.866)--(5,1.866);
	\draw [->,green](5.5,1.866)--(6,1.866);
	
	\draw [green](4,1.866)--(4.5,1.866);
	\draw [green](5,1.866)--(5.5,1.866);
	\draw [green](6,1.866)--(6.5,1.866);
	\draw [dotted,->,green,thick](6.5,1.866)--(7,1.866);
	\draw [dotted,green,thick](7,1.866)--(8,1.866);
	
	\node at ($(8.2,0)+(0:1 and .6)$) {$j$-th layer};
	\node at ($(8.2,1.866)+(0:1 and .6)$) {$(j+1)$-th layer};

	\end{scope}
	\end{tikzpicture}\]
	\caption{Jumping from $j$-th layer to $(j+1)$-th layer of disks (dotted ones) using three circles (solid) for a large enough $j$}
	\label{fig-jump}
\end{figure}
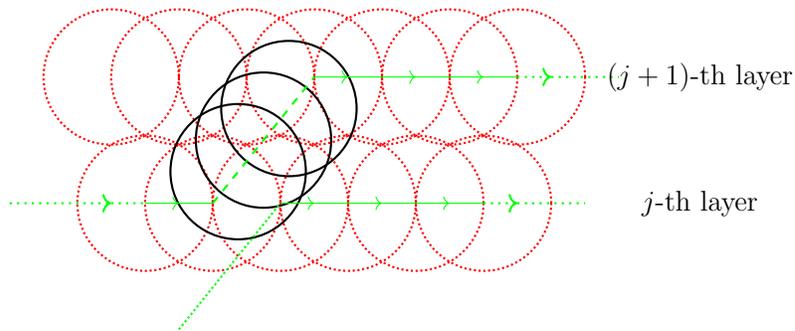

Combining all the appropriate orderings among the disks in the $j$-th layer and orderings of the disks required to jump from the $j$-th layer to the $j+1$-th layer in a suitable way we get an infinite sequence of unit circular disks on $\mathbb {R}^2$, say $(\Dc_n)_{n \in \mathbb N}$ starting with the unit circular disk centered at the origin. We observe that the family of disks $\mathscr S = \{\Dc_n : n \in \mathbb N\}$ covers $\mathbb {R}^2$. Also this sequence of disks $(\Dc_n)$ has the property that the second disk onward, each disk has its center in the previous disk and the smaller angle formed at the center by the line segments joining it with the centers of the previous and the next disks is at least $\frac{2\pi}{3}$.

We assert that the (lower) density of this sequence of unit circular disks is at least $\frac{2\pi}{2 + \sqrt{3}}$ even though this sequence contains many redundant disks.

We note that the collection of disks belonging in the $j$-th layer for all $j \in \{0,\dots,k\}$ covers the dodecagon $\Pi_k$ and (the interior of) $\Pi_k$ doesn't intersect with any of the disks belonging in the $j$-th layer for all $j > k$. Also for a large enough $k$, $\Pi_k$ doesn't intersect with any of the disks required to jump from the $j$-th layer to the $j+1$-th layer for all $j > k$ either. Thus the number of disks that intersect the dodecagon $\Pi_k$, say $N_k$, is at most the sum of the total number of disks in the $0$-th to $k$-th layers and total number of disks required to jump from the $j$-th layer to the next layer with $j \le k$, i.e. 

\begin{align*}
N_k &\le 1+ \sum_{j=1}^{k} 12j + c(k+1) \\
&= 1 + 6k(k+1) + c (k+1) \\
&= 6k^2 + (c+6)k + c+1.
\end{align*}
Therefore, the (lower) density of the covering $\mathscr S$ is less than or equal to  

\begin{align*}
\lim\limits_{k \to \infty} \frac{N_k\cdot \pi}{a(\Pi_k)} 
&\le \lim\limits_{k \to \infty} \frac{\left(6k^2 + (c+6)k + c+1\right)\cdot \pi}{3\left(2+\sqrt{3}\right)k^2}\\
&= \frac{6 \pi}{3\left(2+\sqrt{3}\right)}\\
&=\frac{2\pi}{2 + \sqrt{3}}.
\end{align*}

\section{Covering the plane by disks each containing centers of at least two other disks}
In this section we prove Theorem~\ref{thm1.7}. Let $\{\Dc_n : n \in \mathbb N\}$ be a covering of $\mathbb {R}^2$ by unit circular disks such that  the centers of the disks form a lattice $\Lambda$ in $\mathbb {R}^2$. Let $\{v_1, v_2\}$ be a set of generators of $\Lambda$, i.e. $\Lambda = \{\alpha v_1+\beta v_2 : (\alpha,\beta)\in 
{\mathbb Z}^2\}$. Thus every lattice point $p$ can be represented as a pair $(a,b)\in 
{\mathbb Z}^2$, where $p=av_1+bv_2$. The ``fundamental parallelogram" corresponding to the lattice $\Lambda$ is given by $$\{xv_1+yv_2 : (x,y)\in [0,1]\times [0,1]\}.$$ 
Area of the fundamental parallelogram is equal to the ``\emph{determinant of the lattice}" ($\mathrm{det} (\Lambda)$), i.e.  
$$\mathrm{det} (\Lambda) = |\mathrm{det} [v_1,v_2]|.$$
Now if we generate a Voronoi diagram by setting the lattice points belonging to $\Lambda$ (i.e. centers of the dics) as the generators, the Voronoi cells corresponding to the lattice points are congruent to each other and they tile the plane.
We can also show that area of each Voronoi cell equals $\mathrm{det} (\Lambda)$.

From the definition of covering density and its relation with the area of the Voronoi cells as discussed in the previous section, it follows that the (lower) density of the covering corresponding to the lattice $\Lambda$ 
equals 

$$\frac{a(\Dc)}{\textrm{area of Voronoi cells corresponding to the lattice points}} = \frac {\pi}{\mathrm {det} (\Lambda)}.$$

Now we need to maximize $\mathrm {det} (\Lambda)$ with the constraint that  
each disk contains the centers of at least two other disks. 

\begin{PROPOSITION}
	If the lattice $\Lambda$ corresponds to a lattice covering of the plane by unit circular disks with the constraint that each disk contains the centers of at least two other disks, then $\mathrm {det} (\Lambda) \le 1+ \frac{\sqrt{3}}{2}$.
\end{PROPOSITION}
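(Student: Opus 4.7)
The plan is to reduce to a Minkowski-reduced basis, write the covering radius in closed form, and then optimize. First, the hypothesis that each disk contains the centers of at least two other disks is equivalent to saying that the shortest nonzero lattice vector $v_1$ has length at most $1$: if $|v_1| \le 1$, then $v_1$ and $-v_1$ are two distinct nonzero lattice vectors inside the unit disk at the origin. So I would pick a reduced basis with $v_1 = (\ell, 0)$, $\ell = |v_1| \le 1$, and $v_2 = (a, b)$ satisfying $0 \le a \le \ell/2$, $b > 0$, and $|v_2| \ge \ell$; then $\det(\Lambda) = \ell b$.

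Next I would translate the covering condition into a bound on the Voronoi cell of the origin. Since unit disks centered at the lattice points cover $\mathbb{R}^2$, the Voronoi cell lies in the closed unit disk; that is, the covering radius $\rho$ of $\Lambda$ is at most $1$. For our reduced basis the Voronoi cell is the centrally symmetric hexagon bounded by the perpendicular bisectors of $\pm v_1,\ \pm v_2,\ \pm(v_2 - v_1)$. I would compute its three upper-half vertices as intersections of consecutive bisectors and verify, by a short algebraic manipulation, that all three have the same squared distance
\[
\rho^2 \;=\; \frac{\ell^2}{4} \;+\; \frac{\bigl(b^2 + a(a-\ell)\bigr)^2}{4 b^2}
\]
from the origin, so that the Voronoi hexagon is in fact inscribed in the covering circle.

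Imposing $\rho \le 1$ yields (using $a(a-\ell) \le 0$ on $[0,\ell]$) the quadratic inequality $b^2 - \sqrt{4-\ell^2}\,b + a(a-\ell) \le 0$ in $b$, whose largest admissible solution is $\tfrac{1}{2}\bigl(\sqrt{4-\ell^2} + \sqrt{4 - \ell^2 + 4a(\ell-a)}\bigr)$. Maximizing this bound on $b$ over $a \in [0, \ell/2]$ gives $b \le 1 + \tfrac{1}{2}\sqrt{4 - \ell^2}$ at $a = \ell/2$, so $\det(\Lambda) = \ell b \le \ell + \tfrac{1}{2}\ell\sqrt{4 - \ell^2}$. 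A routine derivative computation shows that the function $\ell \mapsto \ell + \tfrac{1}{2}\ell\sqrt{4 - \ell^2}$ is strictly increasing on $(0,1]$, so its maximum on the admissible range is attained at $\ell = 1$ and equals $1 + \tfrac{\sqrt{3}}{2}$, which is the desired bound.

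The main subtlety I expect to address is verifying that the six perpendicular bisectors I used really do determine the Voronoi cell, i.e., that no longer lattice vector (such as $\pm 2v_1$ or $\pm(v_1 + v_2)$) contributes an extra face cutting off the upper-right vertex. This reduces to the inequality $a\ell \le a^2 + b^2$, which is automatic from the reduction conditions $a \le \ell/2$ and $|v_2| \ge |v_1|$. The degenerate case $a = 0$ corresponds to a rectangular Voronoi cell and is a limit of the above formulas, yielding the strictly weaker bound $\det(\Lambda) \le \sqrt{3}$.
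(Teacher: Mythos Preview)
Your argument is correct, but it follows a genuinely different route from the paper's. The paper never writes down the Voronoi hexagon or the covering-radius formula. Instead, after fixing the shortest vector $v_1=(2\alpha,0)$ with $2\alpha\le 1$, it chooses the test point $P=\bigl(\alpha,\sqrt{1-\alpha^2}+\epsilon\bigr)$ lying just above the strip covered by the disks centered on the $x$-axis. Since the covering must reach $P$, there is a lattice point $w=(\beta_1,\beta_2)$ off the $x$-axis with $|\beta_2|\le 1+\sqrt{1-\alpha^2}+\epsilon$; hence the parallelogram spanned by $v_1$ and $w$ has area at most $2\alpha\bigl(1+\sqrt{1-\alpha^2}+\epsilon\bigr)$, which is increasing in $\alpha$ and bounded by $1+\tfrac{\sqrt3}{2}$ at $\alpha=\tfrac12$. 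The punchline is that this parallelogram's area is an \emph{integer multiple} of $\det(\Lambda)$, so $\det(\Lambda)$ inherits the bound. Compared with your approach, the paper's proof is shorter and needs no reduced basis or facet-vector verification; your approach, on the other hand, is more structural---it identifies the extremal lattice parameters $(\ell,a,b)=(1,\tfrac12,1+\tfrac{\sqrt3}{2})$ directly from the optimization and makes explicit that the covering radius of a planar lattice is attained simultaneously at all Voronoi vertices, a fact that is reusable beyond this problem.
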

\begin{proof}
Let $v_1$ be the position vector of the center of a disk closest to (but not centered at) the origin. From the constraint it follows that length of $v_1$ is at most 1. Since the centers form the  two dimensional lattice $\Lambda$, there is a center (of a disk) with the position vector $v_2$ such that $v_1$ and $v_2$ generate $\Lambda$. 

Let, without loss of generality, $v_1=(2\alpha,0)$, where $\alpha \in \left(0, \frac 1 2\right]$. Let us consider the 
point $P =\left(\alpha, \sqrt{1-\alpha^2}+ \epsilon \right)$ where $\epsilon > 0$. This 
point is not covered by any disk whose center lies on the lattice 
points on the $x$-axis. Let the position vector of the center of a disk 
that covers $P$ be $w=(\beta_1,\beta_2)$ (see Figure~\ref{fig-inout}). 

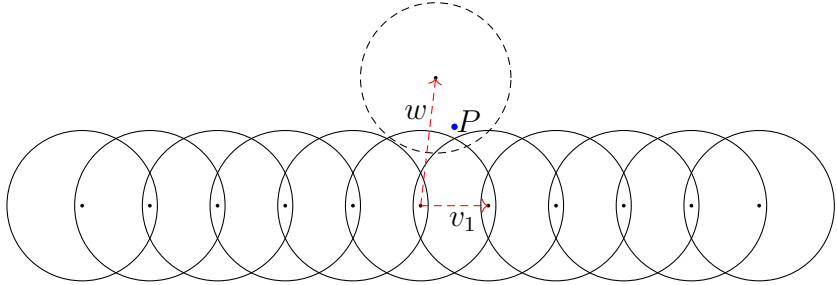
\begin{figure}[!ht]
	\centering
	\begin{tikzpicture}
	\begin{scope}[scale=1, transform shape]
	
	\foreach \a in {0, 0.9, ...,9}
	\draw (\a,0) circle (1);

	\foreach \a in {0, 0.9, ...,9}
	\node[circle, draw, fill=black, inner sep=0pt, minimum size=1pt] at (\a,0) {};
	
	\node[xshift=4.5cm, circle, draw, blue, fill=blue, inner sep=0pt, minimum size=2pt] at (0.45,1.05) {};
	
	\draw [densely dashed] (4.7,1.7) circle (1);
	\node[xshift=4.7cm, circle, draw, fill=black, inner sep=0pt, minimum size=1pt] at (0,1.7) {};
	
	\node at ($(4.85,0)+(-45:.3 and .3)$) {$v_1$};
	\node at ($(4.7,1)+(150:.3 and .5)$) {$w$};
	\node at ($(4.85,1)+(15:.3 and .5)$) {$P$};

	\draw [xshift=4.5cm,red,densely dashed,->] (0,0)--(0.9,0);
	\draw [xshift=4.5cm,red,densely dashed,->] (0,0)--(0.2,1.7);
	
	\end{scope}
	\end{tikzpicture}
	
	\caption{Point $P=\left(\alpha, \sqrt{1-\alpha^2}+ \epsilon \right)$ and the disk centered at $(\beta_1,\beta_2) = w$ containing $P$}
	\label{fig-inout}
\end{figure}

We want to find an upper-bound on the area of the parallelogram, say $\Ply$, formed by the vectors $v_1$ and $w$. We note that $a(\Ply) = 2\alpha\cdot |\beta_2|$. Since the unit disk centered at $(\beta_1,\beta_2)$ contains the point $P$, we get $|\beta_2| \le \left(\sqrt {1-\alpha^2}+\epsilon\right)+1$. This implies $a(\Ply) \le 2\alpha \cdot \left(\sqrt {1-\alpha^2}+1+\epsilon\right)$. 
The function $$f(\alpha) = 2\alpha\cdot \left(\sqrt {1-\alpha^2}+1+\epsilon\right)$$ is increasing on $\left(0,\frac 1 2\right]$ and hence attains its maximum at $\alpha =\frac 1 2$.
Also, since $\epsilon$ is arbitrary, we get $a(\Ply) \le 1+ \frac{\sqrt{3}}{2}$.

Since the vectors $v_1$ and $v_2$ generate the lattice $\Lambda$, $a(\Ply)$ is a positive integral multiple of the area of the parallelogram formed by $v_1$ and $v_2$, viz. $\mathrm {det} (\Lambda)$. Thus $\mathrm {det} (\Lambda) \le a(\Ply) \le 1+ \frac{\sqrt{3}}{2}$.
\end{proof}

The lattice generated by the vectors $v_1=(1,0)$ and $v_2= \left(\frac 1 2, 1+ \frac{\sqrt{3}}{2}\right)$ as shown in Figure~\ref{fig-inout-lattice} corresponds to a lattice covering of the plane by unit circular disks such that each disk contains the centers of two other disks and the lattice determinant equals $1+ \frac{\sqrt{3}}{2}$.
This shows it is an optimal lattice covering with the given constraint and has the (lower) covering density 
$\frac {\pi}{1+ \frac{\sqrt{3}}{2}} = \frac{2\pi}{2 + \sqrt{3}}$. This proves Theorem~\ref{thm1.7}.

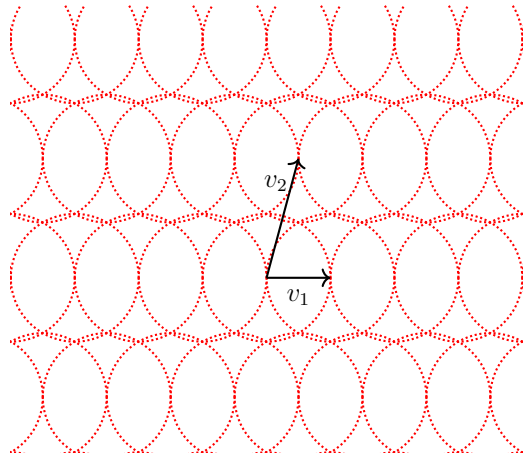
\begin{figure}[!ht]
	\centering
	\begin{tikzpicture}
	\begin{scope}[scale=0.85, transform shape]
	\clip(2,1) rectangle (10,8);
	\foreach \a in {0, 1, ...,12}
	\draw [red,densely dotted, thick] (\a,0) circle (1);
	
	\foreach \a in {0, 1, ...,12}
	\draw [xshift=0.5cm,yshift=1.866cm,red,densely dotted, thick] (\a,0) circle (1);
	
	\foreach \a in {0, 1, ...,12}
	\draw [yshift=3.732cm, red,densely dotted, thick] (\a,0) circle (1);
	
	\foreach \a in {0, 1, ...,12}
	\draw [xshift=0.5cm,yshift=5.598cm, red,densely dotted, thick] (\a,0) circle (1);
	
	\foreach \a in {0, 1, ...,12}
	\draw [yshift=7.464cm, red,densely dotted, thick] (\a,0) circle (1);
	
	\draw [yshift=3.732cm,xshift=6cm,->, thick] (0,0)--(1,0);
	\draw [yshift=3.732cm,xshift=6cm,->, thick] (0,0)--(0.5,1.866);

	\node at ($(6.5,3.732)+(-90:.3 and .3)$) {$v_1$};
	\node at ($(6.2,4.72)+(100:.3 and .5)$) {$v_2$};
	\end{scope}
	\end{tikzpicture}
	
	\caption{Covering corresponding to the lattice generated by the vectors $v_1 = (1,0)$ and $v_2 = \left(\frac 1 2, 1+ \frac{\sqrt{3}}{2}\right)$}
	\label{fig-inout-lattice}
\end{figure}

\section{Conclusions}
Problem~\ref{prob-1} can be generalized as follows. Let $(\Dc_n)_{n \in \mathbb N}$ be a sequence of unit circular disks that covers $\mathbb {R}^2$ in such a way that for all $n \ge 2$, distance between the centers of the disks $\Dc_{n-1}$ and $\Dc_n$ is at most a positive real number $\rho$. 
What is the optimal lower density of covering the plane by a sequence of unit circular disks with the aforementioned constraint? Can the optimal lower density be achieved by a suitable sequence covering?

If $\rho$ is large enough (for example when $\rho$ is close to 2), then the problem can be reduced to the problem of covering the plane with unit circular disks without any constraints. We considered the case $\rho = 1$ in Section~2 and established a bound on the optimal lower density with a restriction on the ``turning angles". We can use the techniques used in Section~2 to find out a bound for lower covering density for any $\rho$ with an analogous restriction on the turning angles depending on $\rho$. In Section~2 we have also constructed a sequence of disks which attains the established lower bound. The same idea may be useful for constructing a sequence of disks covering the plane with the optimal density for a given $\rho$ and the corresponding restriction on the turning angles.

We also observe that for a sequence of disks covering the plane admitting the restriction in Problem~\ref{prob-1}, second disk onward, \emph{new} area (i.e. not already covered by the previous disks) that can be covered by each disk is at most the area of the shaded crescent, say $\C$ in Figure~\ref{fig-crescent}. 

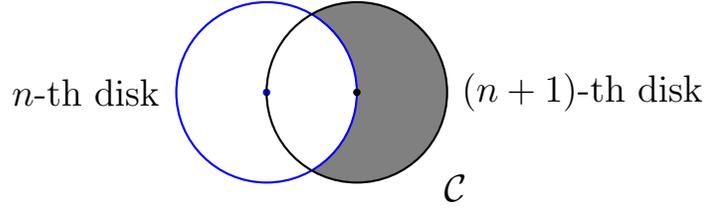
\begin{figure}[!ht]
	\centering
	\begin{tikzpicture}
	\begin{scope}[scale=1.2, transform shape]
	
	\path [ fill=gray]
	(0.5,-0.866) arc [radius=1, start angle=-120, end angle=120]
	--
	(0.5,0.866) arc [radius=1, start angle=60, end angle=-60] 
	;

	\draw [blue, thick] (0,0) circle (1);
	\node[circle, draw, blue, fill=blue, inner sep=0pt, minimum size=2pt] at (0,0) {};	
	
	\draw [thick](1,0) circle (1);
	\node[circle, draw, , fill=black, inner sep=0pt, minimum size=2pt] at (1,0) {};

	\node  at ($(-1,0)+(180:1 and 2.3)$) {$n$-th disk};
	\node  at ($(2,0)+(0:1.5 and 2.5)$) {$(n+1)$-th disk};
	\node  at ($(2,0)+(-25:0.1 and 2.5)$) {$\C$};
	\end{scope}
	\end{tikzpicture}
	
	\caption{Shaded crescent $\C$, maximum ``new" area that can be covered by the $(n+1)$-th disk}
	\label{fig-crescent}
\end{figure}

We note that $a(\C) = \sin \frac{2\pi}{3} + \frac{\pi}{3}$. Therefore, if $\gamma'(\Dc)$ is the optimal (lower) density of covering the plane by a sequence of unit circular disks in such a way that each disk (excluding the first one) contains the center of the previous one, then 
$$\gamma'(\Dc) \ge \frac{a(\Dc)}{a(\C)} = \frac{\pi}{\sin \frac{2\pi}{3} + \frac{\pi}{3}} \approx 1.64204.$$
We also note that the lower bound obtained in this way is a very crude one; this bound is not achievable here as it can only be achieved by a covering which is ``almost like a tiling" (by the crescent shape). For example an analogical crude lower bound on the optimal (lower) density of covering by congruent circular disks without any constraints is $1$ whereas the actual achievable lower bound as shown by Kershner is $\frac{2\pi}{\sqrt{27}}  \approx 1.20920$. 

From this observation and the results in Section~2 and Section~3 it seems that the restriction on the turning angles is only due to the limitations of the proof-techniques used. We observe that if we get rid of the angle restriction, i.e. if, for some $n \in \mathbb{N}$, $\angle C_{n-1}C_nC_{n+1} < \frac{2\pi}{3}$ (we refer to Figure~\ref{fig-conj} for an example), then the corresponding $\nu_n$-gon $\Ply_n$ constructed as in the proof of Lemma~\ref{lemma-poly} would be contained in a convex, compact set larger than $\M'_\theta$ for $\frac{2\pi}{3} \le \theta \le \pi$ or in particular larger than $\M$ (for example, $\Ply_n$ would be contained in the convex, compact set bounded by solid lines in Figure~\ref{fig-conj}). 

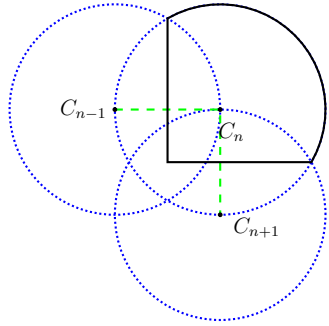
\begin{figure}[!ht]
	\centering
	\[\begin{tikzpicture}
		\begin{scope}[scale=0.7, transform shape]
	\draw[densely dotted, thick, blue] (-2,0) circle (2);
	\draw[densely dotted, thick, blue] (0,0) circle (2);
	\draw[densely dotted, thick, blue] (0,-2) circle (2);
	
	\draw [dashed,green,thick](-2,0)--(0,0)--(0,-2);
	
	\node[circle, draw, fill=black, inner sep=0pt, minimum size=2pt] at (0,0) {};
	\node[circle, draw, fill=black, inner sep=0pt, minimum size=2pt] at (-2,0) {};
	\node[circle, draw, fill=black, inner sep=0pt, minimum size=2pt] at (0,-2) {};
	
	\draw [thick,domain=-30:120] plot ({2*cos(\x)}, {2*sin(\x)});
	\draw [thick](-1,1.75)--(-1,-1)--(1.75,-1);

	\node at ($(-2,0)+(180:.6 and .6)$) {$C_{n-1}$};
	\node at ($(0,0)+(-45:.3 and .6)$) {$C_n$};
	\node at ($(0,-2)+(-30:.8 and .5)$) {$C_{n+1}$};

	\end{scope}
	\end{tikzpicture}\]
	\caption{Example of three disks in a sequence with $\angle C_{n-1}C_nC_{n+1} < \frac{2\pi}{3}$}
	\label{fig-conj}
\end{figure}

In fact when $\nu_n$ is large enough (for example bigger than 6), in specific cases the inequality $a(\Ply_n) \le a(\Ply^*_n)$ (conclusion of Lemma~\ref{lemma-bigger-poly}) will no longer hold. But we also observe that if $\angle C_{n-1}C_nC_{n+1} < \frac{2\pi}{3}$, then it induces a larger intersection of $\Dc_{n-1}$ and $\Dc_{n+1}$ which would potentially reduce these two disks' ``contributions" in the covering. Also for the $\nu_n$-gon $\Ply_n$, larger $\nu_n$ is, more congested the neighboring disks of $\Dc_n$ are. This again would potentially reduce the contributions of the neighboring disks of $\Dc_n$. Thus it seems we would never gain any significant advantage globally by allowing sharper turns.

We may also note that this proof-technique provides a bound on optimal density of covering the plane by congruent copies of a convex, compact set in $\mathbb{R}^2$ with a non-empty interior with the restriction that no two sets \emph{cross each other} (Corollary~\ref{FT-bound}). But it is not necessary for a covering to be ``crossing-free" in order to be an optimal covering.
For a ``fat" ellipse (i.e. sufficiently close to a circle) the lower bound on the density of covering the plane with congruent copies of the ellipse obtained from L. Fejes T\'{o}th's theorem (Corollary~\ref{FT-bound}) holds true \cite{Heppes2003}, and thus the optimal lattice covering density is the most economical. But it is not known whether an optimal lattice covering is an optimal covering for an elongated ellipse. Moreover, G.~Fejes T\'{o}th and W.~Kuperberg pointed out in \cite{tóth_kuperberg_1995} that it is very likely that in higher-dimensional spaces the most economical coverings with sufficiently long ellipsoids is \emph{not} crossing-free.
All these motivate us to propose Conjecture~\ref{conj}.

\section*{Acknowledgements}
We are grateful to the anonymous referees who reviewed the journal version of the article for their valuable remarks and suggestions which substantially improved the presentation of this article. We are also thankful to Mainak Ghosh of School of Mathematics, Tata Institute of Fundamental Research for kindly proofreading an initial draft of the manuscript and providing feedback.

 \bibliographystyle{plain} 

\end{document}